\newcommand*{\mailto}[1]{\href{mailto:#1}{\nolinkurl{#1}}}
\newcommand{\arxiv}[1]{\href{http://arxiv.org/abs/#1}{arXiv:#1}}
\newcommand{\bbC}{{\mathbb{C}}}
\newcommand{\bbN}{{\mathbb{N}}}
\newcommand{\bbR}{{\mathbb{R}}}
\newcommand{\bsA}{{\boldsymbol{A}}}
\newcommand{\bsD}{{\boldsymbol{D}}}
\newcommand{\bsH}{{\boldsymbol{H}}}
\newcommand{\bsI}{{\boldsymbol{I}}}
\newcommand{\cA}{{\mathcal A}}
\newcommand{\cB}{{\mathcal B}}
\newcommand{\cC}{{\mathcal C}}
\newcommand{\cH}{{\mathcal H}}
\newcommand{\beq}{\begin{equation}}
\newcommand{\enq}{\end{equation}}
\DeclareMathOperator{\supp}{supp}
\DeclareMathOperator{\dom}{dom}
\DeclareMathOperator{\tr}{tr}
\DeclareMathOperator*{\slim}{s-lim}
\renewcommand{\Im}{\text{\rm Im}}
\newcommand{\loc}{\operatorname{loc}}
\newcommand{\no}{\notag}
\newcommand{\lb}{\label}
\newcommand{\f}{\frac}
\newcommand{\ol}{\overline}
\newcommand{\wti}{\widetilde}
\newcommand{\Oh}{O}
\newcommand{\bi}{\bibitem}
\let\geq\geqslant
\let\leq\leqslant
\def\theequation{\@arabic\c@equation}
\numberwithin{equation}{section}
\newtheorem{theorem}{Theorem}[section]
\newtheorem{lemma}[theorem]{Lemma}
\newtheorem{example}[theorem]{Example}
\theoremstyle{remark}
\newtheorem{remark}[theorem]{Remark}
\begin{document}

\title[Almost Analytic Extensions To Operator Bounds]{Some Applications Of Almost Analytic Extensions To Operator Bounds in Trace Ideals} 

\author[F.\ Gesztesy]{Fritz Gesztesy} 
\address{Department of Mathematics, 
University of Missouri, Columbia, MO 65211, USA}
\email{\mailto{gesztesyf@missouri.edu}}
\urladdr{\url{http://www.math.missouri.edu/personnel/faculty/gesztesyf.html}}

\author[R.\ Nichols]{Roger Nichols}
\address{Mathematics Department, The University of Tennessee at Chattanooga, 
415 EMCS Building, Dept. 6956, 615 McCallie Ave, Chattanooga, TN 37403, USA}
\email{\mailto{Roger-Nichols@utc.edu}}
\urladdr{\url{http://www.utc.edu/faculty/roger-nichols/index.php}}

\dedicatory{Dedicated with deep admiration to Yurij Makarovich Berezansky \\ on the occasion of 
his 90th birthday.}

\date{\today}
\thanks{R.N. gratefully acknowledges support from an AMS--Simons Travel Grant.} 
\thanks{Appeared in {\it Methods Funct. Anal. Topology}, {\bf 21}, 151--169 (2015).}
\subjclass[2010]{Primary: 47A30, 47A53, 47A60, 47B10; Secondary: 47B25, 30D99, 35Q40.}
\keywords{Almost analytic extensions, trace ideals, operator bounds.}

\begin{abstract} 
Using the Davies--Helffer--Sj\"ostrand functional calculus based on almost analytic extensions, 
we address the following problem: Given self-adjoint operators $S_j$, $j=1,2$, in 
$\cH$, and functions $f$ in an appropriate class, for instance, $f \in C_0^{\infty}(\bbR)$, 
how to control the norm $\|f(S_2) - f(S_1)\|_{\cB(\cH)}$ in terms of the norm of the difference of  resolvents, $\big\|(S_2 - z_0 I_{\cH})^{-1} - (S_1 - z_0 I_{\cH})^{-1}\big\|_{\cB(\cH)}$, for 
some $z_0 \in \bbC\backslash\bbR$. We are particularly interested in the case where $\cB(\cH)$ is replaced by a trace ideal, $\cB_p(\cH)$, $p \in [1,\infty)$.
\end{abstract}

\maketitle



\section{Introduction} \lb{s1} 

Yurij M.\ Berezansky's contributions to analysis in general, and areas such as functional analysis, operator theory, spectral and inverse spectral theory, harmonic analysis, analysis in spaces of functions of an infinite number of variables, stochastic calculus, mathematical physics, quantum field theory, integration of nonlinear evolution equations, in particular, are legendary and of a lasting nature. The list of fields his ground breaking work changed in dramatic fashion can easily be continued in many directions as is demonstrated by the extraordinary breadth revealed in his highly influential monographs \cite{Be68}--\cite{BSU96a}. Since operator theoretic methods frequently play a role in his research interests, we hope our modest contribution to operator bounds in trace ideals will create some joy for him.

This paper has its origins in the following question: Given self-adjoint operators $S_j$, $j=1,2$, in 
$\cH$, and functions $f$ in an appropriate class, for instance, $f \in C_0^{\infty}(\bbR)$, 
how to control the norm $\|f(S_2) - f(S_1)\|_{\cB(\cH)}$ in terms of the norm of the difference of resolvents, $\big\|(S_2 - z_0 I_{\cH})^{-1} - S_1 - z_0 I_{\cH})^{-1}\big\|_{\cB(\cH)}$, for some 
$z_0 \in \bbC\backslash\bbR$? In particular, the question is just as natural with $\cB(\cH)$ replaced by a trace ideal, $\cB_p(\cH)$, $p \in [1,\infty)$. 

In fact, our interest in these questions stems from computations of the Witten index 
(a suitable extension of the Fredholm index) for a class 
of non-Fredholm model operators 
\begin{equation} 
\bsD_\bsA^{} = \f{d}{dt} + \bsA, \quad 
\dom(\bsD_\bsA^{})= W^{1,2}(\bbR; \cH) \cap \dom(\bsA),    \lb{1.1} 
\end{equation} 
in the Hilbert space  $L^2(\bbR; \cH)$, where 
\begin{align}
&(\bsA f)(t) = A(t) f(t) \, \text{ for a.e.\ $t\in\bbR$,}    \no \\
& f \in \dom(\bsA) = \bigg\{g \in L^2(\bbR;\cH) \,\bigg|\,
g(t)\in \dom(A(t)) \text{ for a.e.\ } t\in\bbR;    \lb{1.2}  \\
& \quad t \mapsto A(t)g(t) \text{ is (weakly) measurable;} \, 
\int_{\bbR} dt \, \|A(t) g(t)\|_{\cH}^2 <  \infty\bigg\},      \no
\end{align}
with $A(t)$, $t\in \bbR$, a family of 
self-adjoint operators  in $\cH$ with asymptotes $A_{\pm}$ (in norm resolvent sense). 
Interesting concrete examles for $A_{\pm}$ are given by massless Dirac-type operators in 
$\cH = L^2(\bbR^d)$, $d \in \bbN$ 
(the latter are known to be non-Fredholm), see, \cite{CGLPSZ14}--\cite{CGPST14}. 
More precisely, given the sequence of self-adjoint operators 
$\bsH_{j,n}$, $\bsH_j$ in $L^2(\bbR; dt; \cH)$, $j=1,2$, $n \in \bbN$, and self-adjoint operators $A_{+,n}$, $A_+$, $A_-$ in $\cH$, $n\in \bbN$, and a Pushnitski-type relation between the 
spectral shift functions $\xi(\, \cdot \,; \bsH_{2,n}, \bsH_{1,n})$ and $\xi(\, \cdot \,; A_{+,n}, A_-)$ 
for the pairs, $(\bsH_{2,n}, \bsH_{1,n})$ and $(A_{+,n}, A_-)$ of the form 
\begin{equation}
\xi(\lambda; \bsH_{2,n}, \bsH_{1,n}) = \begin{cases} 
\f{1}{\pi} \int_{- \lambda^{1/2}}^{\lambda^{1/2}} 
\f{\xi(\nu; A_{+,n}, A_-) \, d \nu}{(\lambda - \nu^2)^{1/2}} \, 
\text{ for a.e.~$\lambda > 0$,} \\
0, \; \lambda < 0, 
\end{cases}  \; n\in\bbN,  \lb{1.3} 
\end{equation} 
we were interested in performing the limit $n \to \infty$ in \eqref{1.3} to obtain the analogous 
relation for the limiting spectral shift functions $\xi(\, \cdot \,; \bsH_2, \bsH_1)$ and 
$\xi(\, \cdot \, ; A_+ , A_-)$ corresponding to the limiting pairs $(\bsH_2, \bsH_1)$ and 
$(A_+, A_-)$, respectively. The latter is instrumental in computing the Witten index for 
$\bsD_\bsA^{}$. The task of performing the limit $n \to \infty$ in \eqref{1.3} is considerably 
complicated since due to the nature of the approximations involved, no suitable bounds on 
$\xi(\, \cdot \,; \bsH_{2,n}, \bsH_{1,n})$ and $\xi(\, \cdot \,; A_{+,n}, A_-)$ (independent of 
$n \in \bbN$) are readily available.  
To circumvent this difficulty one can resort to a distributional approach considering 
\begin{equation}
\int_{\bbR} d \lambda \, \xi(\lambda; \bsH_{2,n}, \bsH_{1,n}) f'(\lambda) 
= \f{1}{\pi} \int_{\bbR} d\nu \, \xi(\nu; A_{+,n},A_-) F'(\nu), \quad n \in \bbN,   \lb{1.4} 
\end{equation}
where $f \in C_0^{\infty}(\bbR)$ is arbitrary, and $F' \in C_0^{\infty}(\bbR)$ is given by 
\begin{equation}
F'(\nu) = \int_{\nu^2}^{\infty} d\lambda \, f'(\lambda) (\lambda - \nu^2)^{-1/2}, \quad \nu \in \bbR. 
\lb{1.5} 
\end{equation}
Focusing now on the left-hand side 
of \eqref{1.4}, one recalls Krein's trace formula,  
\begin{equation}
{\tr}_{\cB_1(L^2(\bbR; \cH))} (f(\bsH_{2,n}) - f(\bsH_{1,n})) = 
 \int_{[0,\infty)} d \lambda \, \xi(\lambda; \bsH_{2,n}, \bsH_{1,n}) f'(\lambda), \quad 
f \in C_0^{\infty}(\bbR).      \lb{1.6}
\end{equation}
Thus, given control of resolvents in the form 
\begin{align}
\begin{split} 
& \lim_{n\to\infty} \big\|\big[(\bsH_{2,n} - z \, \bsI)^{-m_2} - (\bsH_{1,n} - z \, \bsI)^{-m_2}\big]  \\
& \hspace*{1cm} - [(\bsH_2 - z \, \bsI)^{-m_2} - (\bsH_1 - z \, \bsI)^{-m_2}\big]
\big\|_{\cB_1(L^2(\bbR; \cH))} = 0, \quad z \in \bbC \backslash \bbR.    \lb{1.7}
\end{split} 
\end{align}
one can hope to control 
\begin{align}
\begin{split} 
& \lim_{n\to\infty} \|[f(\bsH_{2,n}) - f(\bsH_{1,n})] 
- [f(\bsH_2) - f(\bsH_1)]\|_{\cB_1(L^2(\bbR; \cH))} =0,    \lb{1.8}
\end{split} 
\end{align} 
and hence obtain 
\begin{align}
&  \lim_{n \to \infty} \int_{[0,\infty)} d \lambda \, \xi(\lambda; \bsH_{2,n}, \bsH_{1,n}) f'(\lambda) 
= \lim_{n \to \infty} {\tr}_{\cB_1(L^2(\bbR; \cH))} (f(\bsH_{2,n}) - f(\bsH_{1,n}))  \no \\
& \quad = {\tr}_{\cB_1(L^2(\bbR; \cH))} (f(\bsH_2) - f(\bsH_1)) 
= \int_{[0,\infty)} d \lambda \, \xi(\lambda; \bsH_2, \bsH_1) f'(\lambda), \quad 
f \in C_0^{\infty}(\bbR).       \lb{1.9}
\end{align} 
Together with controlling the limit $n\to\infty$ on the right-hand side of \eqref{1.4}, this leads to  
\begin{equation}
\int_{\bbR} d \lambda \, \xi(\lambda; \bsH_2, \bsH_1) f'(\lambda) 
= \f{1}{\pi} \int_{\bbR} d\nu \, \xi(\nu; A_+,A_-) F'(\nu).    \lb{1.10} 
\end{equation}
Without going into further details we note that \eqref{1.10} in turn can be used to prove the limiting relation in \eqref{1.3} and the latter leads to a compution of the semigroup regularized Witten index,  $W_s(\bsD_\bsA^{})$, of  $\bsD_\bsA^{}$: Assuming that $0$ is a left and a right Lebesgue point of $\xi(\, \cdot \,; A_+,A_-)$, denoting the corresponding values by $\xi(0_{\pm}; A_+,A_-)$, the semigroup regularized Witten index is found to be 
\begin{align}
W_s(\bsD_\bsA^{}) &:= \lim_{t \uparrow \infty} \tr_{L^2(\bbR;\cH)}
\big(e^{- t \bsH_1} - e^{-t \bsH_2}\big)     \no \\
&\,= [\xi(0_-; A_+,A_-) + \xi(0_+; A_+,A_-)]/2, 
\end{align}
see, for instance, \cite{CGLPSZ14}--\cite{CGPST14}. (We here use the semigroup regularized Witten index rather than the resolvent regularised one as the former is applicable in the case 
of $d$-dimensional Dirac-type operators $A_{\pm}$, $d\in\bbN$.) We trust this sufficiently illustrates our interest in using control of resolvents of self-adjoint operators to gain control over their $C_0^{\infty}$-functions. 

We also note a further complication lies in the fact that when studying 
multi-dimensional Dirac-type operators $A_{\pm}$, resolvents alone are not sufficient in the trace class context and hence sufficiently high powers (depending on the space dimension involved) of resolvents have to be employed.   

Our principal tool to gain control over $C_0^{\infty}$-functions of $S$ in terms of (powers of)  
resolvents of $S$ is furnished by a suitable application of almost analytic extensions 
$\widetilde{f}_{\ell,\sigma}$ of $f$ in the 
form of a Davies--Helffer--Sj\"ostrand functional calculus \cite{Da95a}, \cite[Ch.~2]{Da95}, 
\cite[Proposition~7.2]{HS89}, of the form  
\begin{equation}
f(S) = \pi^{-1} \int_{\bbC} dx dy \, 
\f{\partial \wti f_{\ell,\sigma}}{\partial \ol z}(z) (S - z I_{\cH})^{-1},   \lb{1.11} 
\end{equation}
and a refinement due to Khochman \cite{Kh07} of the type 
\begin{equation}
f(S) = \frac{1}{\pi}\int_{\bbC}dxdy\, \frac{\partial \widetilde{f}_{\ell,\sigma}}{\partial \overline{z}}(z) 
(z-z_0)^m (S - z_0I_{\cH})^{-m} (S - zI_{\cH})^{-1}, \quad m \in \bbN \cup \{0\}.     \lb{1.12}
\end{equation}
to be discussed in some detail in Section \ref{s2}. Section \ref{s3} contains our principal results 
and some applications. Finally, Appendix \ref{sA} recalls various useful facts concerning 
(powers of) resolvents. 

We conclude with some comments on the notation employed in this paper: 
Let $\cH$ be a separable complex Hilbert space, $(\cdot,\cdot)_{\cH}$ the scalar product in $\cH$
(linear in the second argument), and $I_{\cH}$ the identity operator in $\cH$.

Next, if $T$ is a linear operator mapping (a subspace of) a Hilbert space into another, then 
$\dom(T)$ and $\ker(T)$ denote the domain and kernel (i.e., null space) of $T$. 
The spectrum and resolvent set 
of a closed linear operator in a Hilbert space will be denoted by $\sigma(\cdot)$ 
and $\rho(\cdot)$, respectively. t

The convergence of bounded operators in the strong operator topology (i.e., pointwise limits) will be denoted by $\slim$. 

The Banach spaces of bounded and compact linear operators on a separable complex Hilbert space $\cH$ are denoted by $\cB(\cH)$ and $\cB_\infty(\cH)$, respectively; the corresponding 
$\ell^p$-based trace ideals will be denoted by $\cB_p (\cH)$, their norms are abbreviated by  
$\|\cdot\|_{\cB_p(\cH)}$, $p \geq 1$. Moreover, $\tr_{\cH}(A)$ denotes the 
corresponding trace of a trace class operator $A\in\cB_1(\cH)$. 

The symbol $C^{\infty}_0(\bbR)$ represents $C^{\infty}$-functions of compact support on 
$\bbR$; continuous functions on $\bbR$ vanishing at infinity are denoted by $C_{\infty}(\bbR)$.

\section{Basic Facts on Almost Analytic Extensions And The Functional Calculus For 
Self-Adjoint Operators} \lb{s2}

In this preparatory section we briefly recall the basics of almost analytic extensions and the 
ensuing functional calculus for self-adjoint operators, following Davies' detailed treatment in \cite{Da95a}, \cite[Ch.~2]{Da95}. 

One introduces the class $S^{\beta}(\bbR)$, $\beta \in \bbR$, consisting of all functions 
$f \in C^{\infty}(\bbR)$ such that 
\begin{equation}
f^{(m)}(x) \underset{|x|\to\infty}{=} \Oh(\big\langle x\rangle^{\beta - m}\big), \quad 
m \in \bbN_0,    \lb{2.1}
\end{equation}
where $\langle z \rangle = \big(|z|^2 + 1\big)^{1/2}$, $z\in \bbC$. Then in obvious notation, with 
``$\cdot$'' denoting pointwise multiplication, 
$S^{\beta}(\bbR) \cdot S^{\gamma}(\bbR) \subseteq S^{\beta + \gamma}(\bbR)$, 
$\beta, \gamma \in \bbR$, and the space 
\begin{equation}
\cA(\bbR) = \bigcup_{\beta < 0} S^{\beta}(\bbR)     \lb{2.2} 
\end{equation} 
is an algebra under pointwise multiplication with 
\begin{equation}
C_0^{\infty}(\bbR) \subset \cA(\bbR).     \lb{2.3} 
\end{equation}
In particular, $f \in \cA(\bbR)$ implies $f \in C_{\infty}(\bbR)$ (the continuous functions 
vanishing at $\pm \infty$) and $f^{(m)} \in L^1(\bbR)$, $m \in \bbN$. 

Given $f \in \cA(\bbR)$, one defines an {\it almost analytic extension} $\wti f_{\ell, \sigma}$, 
of $f$ to $\bbC$ by 
\begin{equation}
\wti f_{\ell,\sigma} (z) = \sigma(x,y) \sum_{k=0}^{\ell} \f{f^{(k)}(x) (iy)^k}{k!}, \quad 
z = x + iy \in \bbC, \; \ell \in \bbN,     \lb{2.4} 
\end{equation} 
where 
\begin{equation}
\sigma (x,y) = \tau(y/\langle x \rangle), \; x, y \in \bbR, \quad \tau \in C_0^{\infty}(\bbR), \; 
\tau (s) = \begin{cases} 1, & |s| \leq 1, \\ 0, & |s| \geq 2. \end{cases}   \lb{2.5}
\end{equation} 
The precise structure of $\wti f_{\ell, \sigma}$ will not be important and other expresssions 
for it are possible (cf., \cite{Da95a}). 

We note the formula 
\begin{align}
\f{\partial \wti f_{\ell,\sigma}}{\partial \ol z}(z) 
&= \f{1}{2} \bigg(\f{\partial \wti f_{\ell,\sigma}}{\partial x}(z) 
+ i \f{\partial \wti f_{\ell,\sigma}}{\partial y}(z)\bigg)   \no \\
&= \f{1}{2} [\sigma_x(x,y) + \sigma_y(x,y)] \sum_{k=0}^{\ell} \f{f^{(k)}(x) (iy)^k}{k!} 
+ \f{1}{2} \sigma(x,y) \f{f^{\ell + 1}(x) (iy)^{\ell}}{\ell !},    \lb{2.6} \\
& \hspace*{9.1cm}  z \in \bbC,    \no 
\end{align}
implying the crucial fact,
\begin{equation}
\bigg|\f{\partial \wti f_{\ell,\sigma}}{\partial \ol z}(x + iy) \bigg| \underset{y \downarrow 0}{=} 
\Oh\big(|y|^{\ell}\big),    \lb{2.7} 
\end{equation} 
in particular, 
\begin{equation}
\f{\partial \wti f_{\ell,\sigma}}{\partial \ol z}(x) = 0, \quad x \in \bbR.    \lb{2.8} 
\end{equation} 

Following Helffer and Sj\"ostrand \cite[Proposition~7.2]{HS89}, particularly, in the form 
presented by Davies \cite{Da95a}, \cite[Ch.~2]{Da95}, one then establishes a 
functional calculus for self-adjoint operators $S$ in a complex, separable Hilbert space 
$\cH$ via the formula
\begin{equation}
f(S) = \pi^{-1} \int_{\bbC} dx dy \, 
\f{\partial \wti f_{\ell,\sigma}}{\partial \ol z}(z) (S - z I_{\cH})^{-1}.   \lb{2.9} 
\end{equation}
Since the integrand is norm continuous, the integral in \eqref{2.9} is norm convergent, 
in particular, one notes that \eqref{2.7} and \eqref{2.8}, together with the standard estimate 
$\big\|(S - z I_{\cH})^{-1}\big\|_{\cB(\cH)} \leq |\Im(z)|^{-1}$, overcome the apparent 
singularity of the integrand in \eqref{2.9} for $z \in \sigma(S) \subseteq \bbR$ (cf.\ also 
\eqref{2.8}).  

The justification for calling this a functional calculus follows upon proving the following 
facts: \\[1mm]
$\bullet$ The left-hand side of \eqref{2.8} is independent of the choice of 
$\ell \in \bbN$ and the precise \hspace*{2.3mm} form of $\sigma$ in \eqref{2.5}. \\[1mm]
$\bullet$ If $f \in C_0^{\infty}(\bbR)$ with $\supp(f) \cap \sigma(S) = \emptyset$, then $f(S)=0$. 
\\[1mm]
$\bullet$ If $f, g \in \cA(\bbR)$, then $(fg)(S) = f(S) g(S)$, $f(S)^* = \ol f(S)$, 
$\|f(S)\|_{\cB(\cH)} = \|f\|_{L^{\infty}(\bbR)}$. \\[1mm]
$\bullet$ Let $z \in \bbC \backslash \bbR$ and $f_z(x) = (x - z)^{-1}$, then $f_z \in \cA(\bbR)$ 
and $f_z(S) = (S - z I_{\cH})^{-1}$. 

In addition, we note that Khochman \cite{Kh07} proved the following extension of \eqref{2.9}:

\begin{lemma} [\cite{Kh07}] \lb{l2.1}
Let $m \in \bbN$, $f\in C_0^{\infty}(\bbR)$, and suppose that $S$ is self-adjoint in $\cH$. Then, 
\begin{equation}
f(S) = \frac{1}{\pi}\int_{\bbC}dxdy\, \frac{\partial \widetilde{f}_{\ell,\sigma}}{\partial \overline{z}}(z) 
(z-z_0)^m (S - z_0I_{\cH})^{-m} (S - zI_{\cH})^{-1}.     \lb{2.10}
\end{equation}
\end{lemma}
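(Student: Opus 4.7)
The plan is to derive \eqref{2.10} directly from the already-established formula \eqref{2.9} via an iterated resolvent identity followed by an integration-by-parts (Stokes / Cauchy--Pompeiu) argument.

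First I would use the first resolvent identity $(S-zI_{\cH})^{-1} - (S-z_0 I_{\cH})^{-1} = (z-z_0)(S-zI_{\cH})^{-1}(S-z_0 I_{\cH})^{-1}$ inductively to obtain the algebraic decomposition
\begin{equation}
(S-zI_{\cH})^{-1} = \sum_{k=0}^{m-1}(z-z_0)^{k}(S-z_0 I_{\cH})^{-(k+1)} + (z-z_0)^{m}(S-z_0 I_{\cH})^{-m}(S-zI_{\cH})^{-1},    \no
\end{equation}
valid for all $z\in\bbC\backslash\bbR$ and all $m\in\bbN$. Substituting this identity into the integrand of \eqref{2.9} splits $f(S)$ into the target expression \eqref{2.10} plus a remainder
\begin{equation}
R := \frac{1}{\pi}\sum_{k=0}^{m-1}(S-z_0 I_{\cH})^{-(k+1)} \int_{\bbC}dxdy\, \frac{\partial \wti f_{\ell,\sigma}}{\partial \ol z}(z)\,(z-z_0)^{k},    \no
\end{equation}
where the bounded operator $(S-z_0 I_{\cH})^{-(k+1)}$ has been pulled out of the (norm-convergent) integral since it is independent of $z$.

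The heart of the argument is to show $R=0$ by verifying that each scalar integral $\int_{\bbC}\bar\partial \wti f_{\ell,\sigma}(z)\,(z-z_0)^{k}\,dxdy$ vanishes. For this I would invoke the fact that, because $f\in C_0^{\infty}(\bbR)$ and $\sigma(x,y)$ is supported where $|y|\le 2\langle x\rangle$, the almost analytic extension $\wti f_{\ell,\sigma}$ has compact support in $\bbC$. Applying the complex form of Stokes' theorem (or Green's formula) on a large disk containing the support gives
\begin{equation}
\int_{\bbC}\frac{\partial \wti f_{\ell,\sigma}}{\partial \ol z}(z)(z-z_0)^{k}\,dxdy = -\int_{\bbC}\wti f_{\ell,\sigma}(z)\,\frac{\partial}{\partial \ol z}(z-z_0)^{k}\,dxdy = 0,    \no
\end{equation}
the boundary terms vanishing by compact support and the remaining integrand vanishing by holomorphicity of $(z-z_0)^{k}$.

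The main technical obstacle is justifying absolute convergence of the integral in \eqref{2.10}, since the factor $(z-z_0)^{m}$ grows polynomially on the support of $\wti f_{\ell,\sigma}$. This is handled by combining the pointwise bound \eqref{2.7}, giving $|\bar\partial \wti f_{\ell,\sigma}(x+iy)|=\Oh(|y|^{\ell})$ near the real axis, with the resolvent estimate $\|(S-zI_{\cH})^{-1}\|_{\cB(\cH)}\le |\Im z|^{-1}$ and the compactness of $\supp(\wti f_{\ell,\sigma})$; picking $\ell\in\bbN$ sufficiently large (e.g.\ $\ell\ge 1$) makes the integrand jointly bounded by an integrable function, so the operator-valued integral converges in $\cB(\cH)$ and the manipulations above are rigorous.
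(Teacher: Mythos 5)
Your proof is correct, but it follows a genuinely different route from the paper's. The paper only states Lemma \ref{l2.1} with a citation and then rederives \eqref{2.10} at the start of the proof of Theorem \ref{t3.7} via Khochman's device: one sets $g(x)=f(x)(x-z_0)^m\in C_0^{\infty}(\bbR)$, applies the base formula \eqref{2.9} to $g$ with $\widetilde{f}_{\ell,\sigma}(z)(z-z_0)^m$ playing the role of an almost analytic extension of $g$ (legitimate because $(z-z_0)^m$ is entire and the calculus is independent of the choice of extension), and then uses the multiplicative property of the calculus to write $f(S)=(S-z_0I_{\cH})^{-m}g(S)$, which yields \eqref{2.10} at once. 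You instead iterate the first resolvent identity to expand $(S-zI_{\cH})^{-1}$ into the finite sum $\sum_{k=0}^{m-1}(z-z_0)^{k}(S-z_0I_{\cH})^{-(k+1)}$ plus the remainder $(z-z_0)^{m}(S-z_0I_{\cH})^{-m}(S-zI_{\cH})^{-1}$, insert this into \eqref{2.9} for $f$ itself, and annihilate the lower-order terms through the Cauchy--Pompeiu/Stokes identity $\int_{\bbC}dxdy\,\frac{\partial\widetilde{f}_{\ell,\sigma}}{\partial\overline{z}}(z)(z-z_0)^{k}=0$, which holds by compact support of $\widetilde{f}_{\ell,\sigma}$ and holomorphy of $(z-z_0)^{k}$. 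Both arguments are sound; yours is more self-contained, needing only \eqref{2.9} applied to $f$, elementary operator algebra, and a scalar vanishing lemma (essentially the same computation that underlies $f_z(S)=(S-zI_{\cH})^{-1}$), whereas the paper's product device is shorter given the functional-calculus properties recorded in Section \ref{s2} and adapts directly to other weights, e.g.\ the semigroup variant in Lemma \ref{l2.2}. Your convergence discussion --- the bound \eqref{2.7}, the estimate $\|(S-zI_{\cH})^{-1}\|_{\cB(\cH)}\leq|\Im(z)|^{-1}$, and compact support of $\widetilde{f}_{\ell,\sigma}$ --- is exactly what is needed to justify splitting the integral term by term, and your remark on $\ell$ is harmless since \eqref{2.4} already takes $\ell\in\bbN$ and the calculus is $\ell$-independent.
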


We will employ (in fact, rederive) \eqref{2.10} in the proof of Theorem \ref{t3.7}. Next, we discuss another extension focusing on semigroups rather than powers of resolvents.

\begin{lemma} \lb{l2.2} 
Let $t > 0$, $f\in C_0^{\infty}(\bbR)$, and suppose that $S$ is self-adjoint and bounded from below in $\cH$. Then, 
\begin{equation}
f(S) = \frac{1}{\pi} \int_{\bbC}dxdy\, \frac{\partial \widetilde{f}_{\ell,\sigma}}{\partial \overline{z}}(z) e^{tz} e^{- t S}(S - z I_{\cH})^{-1}.      \lb{2.11} 
\end{equation}
\end{lemma}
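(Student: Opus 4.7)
The plan is to derive \eqref{2.11} by combining the Davies--Helffer--Sj\"ostrand formula \eqref{2.9} with an explicit resolvent--semigroup identity, exploiting that $S$ being bounded from below ensures $e^{-tS}\in\cB(\cH)$ with $\big\|e^{-sS}\big\|_{\cB(\cH)}\le e^{-s \, \inf\sigma(S)}$ for all $s\ge 0$. First, I would verify that the proposed integral on the right-hand side of \eqref{2.11} is $\cB(\cH)$-norm absolutely convergent: the integrand is norm continuous in $z$, and on the compact set $\supp(\wti f_{\ell,\sigma})$, its norm is dominated by a constant times $|y|^{\ell}\,|y|^{-1}$ via \eqref{2.7} combined with the estimate $\big\|(S-zI_{\cH})^{-1}\big\|_{\cB(\cH)}\le |\Im(z)|^{-1}$, which is integrable for $\ell\ge 1$ (exactly as in the discussion following \eqref{2.9}).

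The key identity is obtained as follows. For fixed $z\in \bbC\setminus\bbR$, the scalar formula
\[
\frac{1-e^{-t(\lambda-z)}}{\lambda-z}=\int_0^t ds\, e^{-s(\lambda-z)}, \quad \lambda\in\sigma(S),
\]
together with the spectral theorem for $S$, yields the operator relation
\[
(S-zI_{\cH})^{-1}-e^{tz}e^{-tS}(S-zI_{\cH})^{-1}=\int_0^t ds\, e^{sz}e^{-sS}.
\]
Substituting this into \eqref{2.9} and applying Fubini (justified by the uniform bounds above on the compact support of $\wti f_{\ell,\sigma}$) gives
\begin{align*}
f(S) &= \frac{1}{\pi}\int_{\bbC}dxdy\,\frac{\partial \wti f_{\ell,\sigma}}{\partial\ol z}(z)\, e^{tz}e^{-tS}(S-zI_{\cH})^{-1}\\
&\quad + \frac{1}{\pi}\int_0^t ds\, e^{-sS}\int_{\bbC}dxdy\,\frac{\partial \wti f_{\ell,\sigma}}{\partial\ol z}(z)\, e^{sz}.
\end{align*}

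It then remains to show that the double integral in the second term vanishes for every $s\in[0,t]$. Setting $g_s(z):=\wti f_{\ell,\sigma}(z)\, e^{sz}$, I observe that $g_s\in C^{\infty}(\bbC)$ is compactly supported (since $\wti f_{\ell,\sigma}$ is supported in $\{(x,y):x\in\supp(f),\,|y|\le 2\langle x\rangle\}$, which is compact), and since $e^{sz}$ is entire one has $\partial g_s/\partial\ol z = e^{sz}\,\partial \wti f_{\ell,\sigma}/\partial\ol z$. Stokes' theorem (or equivalently integration by parts of $\partial_x g_s+i\partial_y g_s$ in $\bbR^2$, where the boundary contributions vanish by compact support) gives $\int_{\bbC}dxdy\,\partial g_s/\partial\ol z(z)=0$, as required.

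The main obstacle, though essentially routine, is the bookkeeping for the two applications of Fubini and the uniform integrability of the intermediate integrands in the operator-valued setting; these rest entirely on the $O(|y|^{\ell})$ decay \eqref{2.7}, the compactness of $\supp(\wti f_{\ell,\sigma})$, and the semigroup bound on $\big\|e^{-sS}\big\|_{\cB(\cH)}$ for $s\in[0,t]$.
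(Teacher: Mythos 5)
Your proof is correct, but it proceeds by a genuinely different route than the paper. The paper's argument is algebraic on the level of almost analytic extensions: it picks a compactly supported function $E_t$ agreeing with $e^{tx}$ near $\supp(f)$, writes $f(S)=g(S)e^{-tS}$ with $g=fE_t$, invokes Davies' product identity for extensions (that one may replace $\widetilde{(fg)}$ by $\tilde f\,\tilde g$ under the integral), expands the truncated exponential series in $\widetilde{E}_{t,\ell,\sigma}$, passes to the limit $\ell\to\infty$ using the $\ell$-independence of the calculus, and finally absorbs the cutoff into a renamed $\widehat\sigma=\sigma'\sigma$ using $\partial_{\overline z}e^{tz}=0$. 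You instead substitute the Duhamel-type spectral identity $(S-zI_{\cH})^{-1}-e^{tz}e^{-tS}(S-zI_{\cH})^{-1}=\int_0^t ds\, e^{sz}e^{-sS}$ directly into \eqref{2.9} and kill the remainder term by the elementary fact that $\int_{\bbC}dxdy\,\partial_{\overline z}\big(\widetilde f_{\ell,\sigma}(z)e^{sz}\big)=0$ for a compactly supported $C^1$ function (Green/Stokes), with Fubini justified on the compact support of $\widetilde f_{\ell,\sigma}$ and the semigroup bound from semiboundedness of $S$. Your route is more elementary and self-contained: it avoids the product rule for almost analytic extensions, the $\ell\to\infty$ limit, and the relabeling of cutoffs, and it yields \eqref{2.11} directly for any fixed extension with $\ell\ge 1$; what the paper's route buys is uniformity with the Khochman-style argument used for Lemma \ref{l2.1} and Theorem \ref{t3.7}, where the same product/extension manipulations reappear. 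The only point you wave at rather than spell out—interpreting $\int_0^t ds\, e^{sz}e^{-sS}$ and the interchange of the $ds$ and $dxdy$ integrations in the weak (or strong Bochner) sense, since $s\mapsto e^{-sS}$ is only strongly continuous—is indeed routine and does not constitute a gap.
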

\begin{proof} 
We start by noting that if $f,g\in C_0^{\infty}(\bbR)$, it is proved in \cite[p.~28]{Da95} that
\begin{equation}\lb{D.17}
\int_{\bbC}dxdy\, \frac{\partial \widetilde{(fg)}_{\ell'',\sigma''}}{\partial \overline{z}}(z) 
(S - z I_{\cH})^{-1} = \int_{\bbC}dxdy\, 
\frac{\partial( \tilde{f}_{\ell',\sigma'}\tilde{g}_{\ell,\sigma})}{\partial \overline{z}}(z) 
(S - z I_{\cH})^{-1}.
\end{equation}

Next, suppose that $f\in C_0^{\infty}(\bbR)$ and let $E_t \in C_0^{\infty}(\bbR)$ denote a function which coincides with $e^{tx}$ on an open interval $I$ with $\supp(f)\subset I$.  Then
\begin{equation}
\widetilde{E}_{t, \ell,\sigma}(z) = \sigma(x,y)e^{tx} \sum_{k=0}^{\ell}\frac{(i t y)^k}{k!},
\quad z=x+iy,\; x\in I.
\end{equation}
Let $f_{\ell',\sigma'}$ denote an almost analytic extension of $f$.  Setting $g= f E_t$, with 
$\widetilde{g}_{\ell'',\sigma''}$ an almost analytic extension of $g$, in light of the identity
$f(S) = g(S) e^{- t S}$, one infers from the Davies--Helffer--Sj\"ostrand functional calculus \eqref{2.9} applied to $g$,
\begin{align}
f(S)&= \frac{1}{\pi}\int_{\bbC} dxdy\, \frac{\partial \widetilde{g}_{\ell'',\sigma''}}{\partial \overline{z}}(z) e^{- t S}(S - z I_{\cH})^{-1}\no\\
&=\frac{1}{\pi}\int_{\bbC}dxdy\, \frac{\partial (\widetilde{f}_{\ell',\sigma'}\widetilde{E}_{t,\ell,\sigma})}{\partial \overline{z}}(z) e^{- t S}(S - z I_{\cH})^{-1}       \no\\
&=\frac{1}{\pi}\int_{\bbC}dxdy\, \frac{\partial \widetilde{f}_{\ell',\sigma'}}{\partial \overline{z}}(z)\widetilde{E}_{t, \ell,\sigma}(z)e^{- t S}(S - z I_{\cH})^{-1}      \no\\
&\quad +\frac{1}{\pi}\int_{\bbC}dxdy\, \widetilde{f}_{\ell',\sigma'}(z)
\frac{\partial \widetilde{E}_{t,\ell,\sigma}}{\partial \overline{z}}(z) e^{- t S}(S - z I_{\cH})^{-1}      \no\\
&=\frac{1}{\pi}\int_{\bbC}dxdy \, \frac{\partial \widetilde{f}_{\ell'',\sigma''}}{\partial \overline{z}}(z)\sigma(x,y)e^{t x} \bigg(\sum_{k=0}^{\ell}\frac{(i t y)^k}{k!}\bigg)e^{- t S}(S - z I_{\cH})^{-1}\no\\
&\quad +\frac{1}{\pi}\int_{\bbC}dxdy\, \widetilde{f}_{\ell',\sigma'}(z)\Bigg\{\frac{1}{2}\bigg[ \sigma_x(x,y)e^{t x} \bigg(\sum_{k=0}^{\ell}\frac{(i t y)^k}{k!}\bigg)    \no \\
& \hspace*{4.05cm} +\sigma(x,y) e^{t x} \bigg(\sum_{k=0}^{\ell}\frac{(i t y)^k}{k!}\bigg)\bigg]\no\\
&\quad +\frac{i}{2}\bigg[ \sigma(x,y)e^{t x} it\bigg(\sum_{k=0}^{\ell-1}\frac{(i t y)^k}{k!}\bigg)+\sigma_y(x,y)e^{t x} \bigg(\sum_{k=0}^{\ell}\frac{(i t y)^k}{k!} \bigg)
\bigg]\Bigg\}    \no \\
& \hspace*{6.6cm} \times e^{- t S}(S - z I_{\cH})^{-1}.    \lb{2.14} 
\end{align}
Exploiting the fact that $f(S)$ is independent of $\ell$, we now take the limit $\ell\to \infty$ in 
\eqref{2.14}.  Since $\widetilde{f}_{\ell',\sigma'}$ has compact support and takes care of the singularity of the resolvent, $e^{- t S}$ is bounded and $z$-independent, and the exponential series converges uniformly on compact sets, one may pass the limit under the integral to obtain
\begin{align}
f(S)&=\frac{1}{\pi}\int_{\bbC}dxdy\, \frac{\partial \widetilde{f}_{\ell',\sigma'}}{\partial \overline{z}}\sigma(x,y)e^{tz} e^{- t S}(S - z I_{\cH})^{-1}\no\\
&\quad +\frac{1}{\pi}\int_{\bbC}dxdy\, \widetilde{f}_{\ell',\sigma'}(z)\frac{\partial(\sigma(x,y)e^{t z})}{\partial \overline{z}}e^{- t S}(S - z I_{\cH})^{-1}\no\\
&=\frac{1}{\pi}\int_{\bbC}dxdy\, \frac{\partial}{\partial\overline{z}}\big(\widetilde{f}_{\ell',\sigma'}(z)\sigma(x,y)e^{t z} \big)e^{- t S}(S - z I_{\cH})^{-1}\no\\
&=\frac{1}{\pi}\int_{\bbC}dxdy\, \frac{\partial}{\partial\overline{z}}\big(\widetilde{f}_{\ell',\widehat{\sigma}}(z)e^{t z} \big)e^{- t S}(S - z I_{\cH})^{-1},
\end{align}
where $\widehat{\sigma}=\sigma'\sigma$ (which corresponds to choosing $\widehat{\tau}=\tau' \tau$).  It is a simple matter to verify that
\begin{equation}
\frac{\partial}{\partial\overline{z}}\big(\widetilde{f}_{\ell',\widehat{\sigma}}(z)e^{t z} \big) = \frac{\partial \widetilde{f}_{\ell,\widehat{\sigma}}}{\partial \overline{z}}e^{t z},
\end{equation}
which then shows
\begin{equation}
f(S) = \frac{1}{\pi} \int_{\bbC}dxdy\, \frac{\partial \widetilde{f}_{\ell',\widehat{\sigma}}}{\partial \overline{z}}e^{t z} e^{- t S}(S - z I_{\cH})^{-1} 
\end{equation}
and hence \eqref{2.11} (renaming $\ell'$ and $\widehat \sigma$).
\end{proof}

Historically, the idea of almost analytic (resp., pseudo-analytic) extensions appeared 
in H\"ormander \cite{Ho69} and Dynkin \cite{Dy75}, \cite{Dy93}, Melin and Sj\"ostrand 
\cite{MS75} (see also \cite[Ch.\ 8]{DS99}, \cite[Sect.\ III.6]{MP89} for expositions and 
\cite{JN94} for an alternative approach). The 
functional calculus was used by Helffer and Sj\"ostrand in their seminal 1989 paper on 
Schr\"odinger operators with magnetic fields \cite{HS89}, which in turn was the basis for 
the systematic treatment by Davies \cite{Da95a}, \cite[Ch.\ 2]{Da95}. Since these early 
developments, there has been a large body of literature in connection with spectral theory 
for Schr\"odinger and Dirac-type operators applying this functional calculus. While a complete 
list of references in this context is clearly beyond the scope of this paper, we want to illustrate 
the great variety of applications that rely on this functional calculus a bit and hence refer to  
\cite{CCO02}, \cite{Di06}, \cite{DD14}, \cite{DP03}, \cite{DP10}, \cite{DS96}, \cite{DZ12}, \cite{FHS10}, \cite{FGM08}, \cite{Ge92}, \cite{Ge08}, \cite{Gr04}, \cite{HS90}, \cite{Kh07}, \cite{ORS13}, \cite{PRS13}, \cite{SZ91}, \cite{Sk92}, and the references cited therein. 

While we here exclusively focus on linear operators in a Hilbert space, this functional calculus applies to operators in Banach spaces with real spectrum, see, for instance, \cite{BF03}, 
\cite{Cl12}, \cite{Da95a}, \cite{Da95b}, \cite{GMP02}, \cite{GP97}. Extensions to the case where 
the spectrum is contained in the unit circle or contained in finitely-many smooth arcs were also treated in \cite{Dy75}.

\section{Some Applications} \lb{s3}

In this section we apply the almost analytic extension method and its ensuing functional 
calculus for self-adjoint operators to derive various norm bounds and convergence properties of 
operators in trace ideals. 

We start with the following estimates established in the proof of \cite[Theorem~2.6.2]{Da95} 
(more precisely, \eqref{3.3} is proved in \cite{Da95}, but then the rest of Lemma \ref{l3.1} 
is obvious):  

\begin{lemma} \lb{l3.1}
Let $z_0 \in \bbC \backslash \bbR$, $f \in \cA(\bbR)$ and suppose that $S_j$, $j=1,2$, 
are self-adjoint in $\cH$. Then 
\begin{align}
\begin{split} 
\| f(S_2) - f(S_1)\|_{\cB(\cH)} & \leq \f{8}{\pi} \int_{\bbC} dx dy \, 
\bigg|\f{\partial \wti f_{2,\sigma}}{\partial \ol z}(z)\bigg| \f{\big[|z_0|^2 + |z|^2\big]}{|\Im(z)|^2}  \\
& \quad \times \big\|(S_2 - z_0 I_{\cH})^{-1} - (S_1 - z_0 I_{\cH})^{-1}\big\|_{\cB(\cH)}.  \lb{3.3} 
\end{split} 
\end{align}
In addition, if for some $p \in [1,\infty)$, 
$\big[(S_2 - z_0 I_{\cH})^{-1} - (S_1 - z_0 I_{\cH})^{-1}\big] \in \cB_p(\cH)$ for some $($and 
hence for all\,$)$ $z_0 \in \bbC \backslash \bbR$, then 
\begin{equation}
[f(S_2) - f(S_1)] \in \cB_p(\cH)    \lb{3.4}
\end{equation}
and 
\begin{align}
\begin{split} 
\| f(S_2) - f(S_1)\|_{\cB_p(\cH)} & \leq \f{8}{\pi} \int_{\bbC} dx dy \, 
\bigg|\f{\partial \wti f_{2,\sigma}}{\partial \ol z}(z)\bigg| \f{\big[|z_0|^2 + |z|^2\big]}{|\Im(z)|^2}  \\
& \quad \times \big\|(S_2 - z_0 I_{\cH})^{-1} - (S_1 - z_0 I_{\cH})^{-1}\big\|_{\cB_p(\cH)}.  \lb{3.5} 
\end{split} 
\end{align}
If $\big[(S_2 - z_0 I_{\cH})^{-1} - (S_1 - z_0 I_{\cH})^{-1}\big] \in \cB_{\infty}(\cH)$, the inclusion 
\eqref{3.4} extends to $p=\infty$. 
\end{lemma}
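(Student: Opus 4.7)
The plan is to use the Davies--Helffer--Sj\"ostrand formula \eqref{2.9} for $\ell = 2$ to write both $f(S_2)$ and $f(S_1)$ as norm-convergent integrals, subtract, and then convert the integrand $(S_2 - zI_\cH)^{-1} - (S_1 - zI_\cH)^{-1}$ into a form involving the fixed-base-point difference $(S_2 - z_0 I_\cH)^{-1} - (S_1 - z_0 I_\cH)^{-1}$ via the first resolvent identity. Concretely, first I would establish the algebraic factorization
\begin{align*}
(S_2 - zI_\cH)^{-1} - (S_1 - zI_\cH)^{-1}
&= [I_\cH + (z - z_0)(S_2 - zI_\cH)^{-1}] \\
& \quad \times [(S_2 - z_0 I_\cH)^{-1} - (S_1 - z_0 I_\cH)^{-1}] \\
& \quad \times [I_\cH + (z - z_0)(S_1 - zI_\cH)^{-1}],
\end{align*}
which may be verified using $I_\cH + (z - z_0)(S_j - zI_\cH)^{-1} = (S_j - z_0 I_\cH)(S_j - zI_\cH)^{-1}$ together with the fact that, formally, $(S_1 - z_0 I_\cH) - (S_2 - z_0 I_\cH)$ sandwiched between the two resolvents at $z$ gives $R_2(z) - R_1(z)$, and sandwiched between the two resolvents at $z_0$ gives $R_2(z_0) - R_1(z_0)$.

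Next, I would take operator norms and apply the standard bound $\|(S_j - zI_\cH)^{-1}\|_{\cB(\cH)} \leq |\Im z|^{-1}$ to get
\begin{equation*}
\|I_\cH + (z - z_0)(S_j - zI_\cH)^{-1}\|_{\cB(\cH)} \leq 1 + |z - z_0|/|\Im z| = [|\Im z| + |z - z_0|]/|\Im z|.
\end{equation*}
Squaring and using $|\Im z| \leq |z|$ and $|z - z_0|^2 \leq 2(|z|^2 + |z_0|^2)$, I would obtain the elementary numerical estimate
\begin{equation*}
[|\Im z| + |z - z_0|]^2 \leq 8\bigl(|z|^2 + |z_0|^2\bigr),
\end{equation*}
which produces the factor $8$ in \eqref{3.3}. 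Absorbing $\frac{\partial \wti f_{2,\sigma}}{\partial \ol z}(z)/\pi$ and integrating then yields the desired $\cB(\cH)$ estimate; absolute convergence of the integral is guaranteed by \eqref{2.7} with $\ell = 2$, giving $|\partial \wti f_{2,\sigma}/\partial \ol z| = O(|\Im z|^2)$, which exactly compensates the $|\Im z|^{-2}$ singularity in the bound, together with the fact that $\wti f_{2,\sigma}$ is supported in a horizontal strip of finite width (via the cutoff $\sigma$) with $x$-support controlled by growth of $f$.

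For the trace ideal statement \eqref{3.4}--\eqref{3.5}, I would observe that the same factorization identity, together with the two-sided ideal property of $\cB_p(\cH)$, immediately shows that $(S_2 - zI_\cH)^{-1} - (S_1 - zI_\cH)^{-1} \in \cB_p(\cH)$ for every $z \in \bbC \backslash \bbR$, with $\cB_p$-norm bounded by $[1 + |z-z_0|/|\Im z|]^2$ times $\|(S_2 - z_0 I_\cH)^{-1} - (S_1 - z_0 I_\cH)^{-1}\|_{\cB_p(\cH)}$; this also justifies the independence of the hypothesis from the choice of $z_0$. The integrand in the representation of $f(S_2) - f(S_1)$ is therefore a $\cB_p(\cH)$-valued norm-continuous function whose $\cB_p$-norm is integrable with respect to $dxdy$, so the integral converges in $\cB_p(\cH)$ by standard Bochner integration, and taking $\cB_p$-norms under the integral delivers \eqref{3.5}. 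The same argument with $\cB_p$ replaced by $\cB_\infty$ handles the final assertion, since $\cB_\infty(\cH)$ is likewise a closed two-sided ideal in $\cB(\cH)$.

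The main technical obstacle is really only the algebraic factorization identity above: one must be careful with the asymmetric placement of the factors to get a genuine three-term product that can be estimated by submultiplicativity, rather than an additive decomposition that would yield weaker bounds. Once this identity is in place, all remaining steps (norm estimation, the elementary constant $8$, Bochner integrability, and passage to $\cB_p$) are routine consequences of the Davies--Helffer--Sj\"ostrand calculus developed in Section~\ref{s2}.
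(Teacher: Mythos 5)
Your proof is correct and is essentially the paper's own argument: the factorization you write is precisely identity \eqref{A.-1} (since $I_{\cH}+(z-z_0)(S_j - z I_{\cH})^{-1}=(S_j - z_0 I_{\cH})(S_j - z I_{\cH})^{-1}$), your elementary bound is exactly \eqref{A.0} and yields the same constant $8$, and the $\cB_p$ and $\cB_\infty$ assertions follow from the ideal property and the norm-convergent integral just as in the paper's proof. The only incidental slip is your integrability remark: for general $f\in\cA(\bbR)$ the cutoff $\sigma(x,y)=\tau(y/\langle x\rangle)$ confines $\supp \wti f_{2,\sigma}$ to $\{|y|\le 2\langle x\rangle\}$, which is not a horizontal strip of finite width, so finiteness of the weighted integral is guaranteed for $f\in C_0^{\infty}(\bbR)$, or more generally $f\in S^{\beta}(\bbR)$ with $\beta<-1$ (cf.\ Remark \ref{r3.5a}), while the stated inequalities themselves are unaffected.
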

\begin{proof}
Combining \eqref{2.9}, \eqref{A.-1}, and \eqref{A.0} one obtains,
\begin{equation}
f(S_2) - f(S_1) = \pi^{-1} \int_{\bbC} dx dy \, 
\f{\partial \wti f_{2,\sigma}}{\partial \ol z}(z) \big[(S_2 - z I_{\cH})^{-1}
- (S_1 - z I_{\cH})^{-1}\big]     \lb{3.6} 
\end{equation}
and hence 
\begin{align}
& \|f(S_2) - f(S_1)\|_{\cB(\cH)}    \no \\
& \quad \leq \pi^{-1} \int_{\bbC} dx dy \, 
\bigg|\f{\partial \wti f_{2,\sigma}}{\partial \ol z}(z)\bigg| \big\|(S_2 - z I_{\cH})^{-1}
- (S_1 - z I_{\cH})^{-1}\big\|_{\cB(\cH)}    \no \\
& \quad \leq \pi^{-1} \int_{\bbC} dx dy \, 
\bigg|\f{\partial \wti f_{2,\sigma}}{\partial \ol z}(z)\bigg| 
\big\|(S_2 - z_0 I_{\cH}) (S_2 - z I_{\cH})^{-1}     \no \\ 
& \qquad \times \big[(S_2 - z_0 I_{\cH})^{-1} - (S_1 - z_0 I_{\cH})^{-1}\big]
(S_1 - z_0 I_{\cH}) (S_1 - z I_{\cH})^{-1} \big\|_{\cB(\cH)}    \no \\
& \quad \leq \pi^{-1} \int_{\bbC} dx dy \, 
\bigg|\f{\partial \wti f_{2,\sigma}}{\partial \ol z}(z)\bigg| 
\big\|(S_2 - z_0 I_{\cH}) (S_2 - z I_{\cH})^{-1}\big\|_{\cB(\cH)}     \no \\ 
& \qquad \times \big\|(S_1 - z_0 I_{\cH}) (S_1 - z I_{\cH})^{-1} \big\|_{\cB(\cH)}  
\big\|\big[(S_2 - z_0 I_{\cH})^{-1} - (S_1 - z_0 I_{\cH})^{-1}\big]\big\|_{\cB(\cH)}     \no \\ 
& \quad \leq \bigg(\f{8}{\pi} \int_{\bbC} dx dy \, 
\bigg|\f{\partial \wti f_{2,\sigma}}{\partial \ol z}(z)\bigg| \f{\big[|z_0|^2 + |z|^2\big]}{|\Im(z)|^2}\bigg)  
\no \\
& \qquad \times \big\|\big[(S_2 - z_0 I_{\cH})^{-1} - (S_1 - z_0 I_{\cH})^{-1}\big]\big\|_{\cB(\cH)}. 
\lb{3.7} 
\end{align} 
Precisely the same chain of estimates applies to $\cB(\cH)$ replaced by $\cB_p(\cH)$, relying on the ideal properties of $\cB_p(\cH)$, $p \in [1,\infty)$. The case $p=\infty$ in 
\eqref{3.4} is a consequence of the norm convergent integral on the right-hand side of 
\eqref{3.6}.  
\end{proof}

Combined with a Stone--Weierstrass approximation argument and the fact that 
$\|f(S)\|_{\cB(\cH)} = \|f\|_{L^\infty(\bbR)}$, $f\in \cA(\bbR)$, Lemma \ref{l3.1} yields the 
following well-known fact, recorded, for instance, in \cite[Theorem~2.62]{Da95}, 
\cite[Theorem~V.III.20(a)]{RS80}:

\begin{lemma} \lb{l3.2}
Let $S_n$, $n \in \bbN$, and $S$ be self-adjoint in $\cH$, and suppose that $S_n$ 
converges to $S$ in norm resolvent sense as $n \to \infty$. Then,
\begin{equation}
\lim_{n \to \infty} \|f(S_n) - f(S)\|_{\cB(\cH)} = 0    \lb{3.8} 
\end{equation} 
for all $f \in C_{\infty}(\bbR)$. 
\end{lemma}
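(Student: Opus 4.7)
The plan is a two-step approximation: first establish the convergence for a dense subclass of $C_\infty(\bbR)$ via Lemma~\ref{l3.1}, then extend by continuity of the functional calculus, following the hint that pairs the bound \eqref{3.3} with the norm identity $\|f(S)\|_{\cB(\cH)} = \|f\|_{L^\infty(\bbR)}$ quoted in Section~\ref{s2}.

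\emph{Step 1 (dense class).} I would first verify the conclusion for $f \in C_0^{\infty}(\bbR) \subset \cA(\bbR)$. Fixing any $z_0 \in \bbC \setminus \bbR$, Lemma~\ref{l3.1} yields
\[
\|f(S_n) - f(S)\|_{\cB(\cH)} \leq C(f, z_0) \big\|(S_n - z_0 I_{\cH})^{-1} - (S - z_0 I_{\cH})^{-1}\big\|_{\cB(\cH)},
\]
where
\[
C(f, z_0) := \f{8}{\pi}\int_{\bbC} dx dy \, \bigg|\f{\partial \widetilde{f}_{2,\sigma}}{\partial \overline z}(z)\bigg| \f{|z_0|^2 + |z|^2}{|\Im(z)|^2}.
\]
This constant is finite: by \eqref{2.7} (with $\ell = 2$) the factor $|\partial \widetilde{f}_{2,\sigma}/\partial \overline z(x+iy)|$ is $O(|y|^2)$ as $y \to 0$, which absorbs the $|\Im(z)|^{-2}$ singularity near the real axis, while the compact support of $\widetilde{f}_{2,\sigma}$ (inherited from $f \in C_0^{\infty}(\bbR)$ together with the cutoff $\sigma$ in \eqref{2.5}) keeps the $x$- and outer $y$-integrations finite. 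The norm resolvent convergence hypothesis then forces the right-hand side, and hence $\|f(S_n) - f(S)\|_{\cB(\cH)}$, to zero.

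\emph{Step 2 (density upgrade).} I would pass from $C_0^{\infty}(\bbR)$ to $C_{\infty}(\bbR)$ by a standard $\varepsilon/3$-argument. The space $C_0^{\infty}(\bbR)$ is sup-norm dense in $C_{\infty}(\bbR)$ (truncation with a smooth cutoff followed by mollification, or Stone--Weierstrass on the one-point compactification of $\bbR$). Given $f \in C_{\infty}(\bbR)$ and $\varepsilon > 0$, pick $g \in C_0^{\infty}(\bbR)$ with $\|f - g\|_{L^\infty(\bbR)} < \varepsilon/3$. The functional-calculus bound $\|h(T)\|_{\cB(\cH)} \leq \|h\|_{L^\infty(\bbR)}$, valid for any self-adjoint $T$ in $\cH$ and any $h \in C_{\infty}(\bbR)$ (extending by continuity the identity recorded in Section~\ref{s2} for $h \in \cA(\bbR)$), yields $\|(f - g)(S_n)\|_{\cB(\cH)}, \|(f - g)(S)\|_{\cB(\cH)} \leq \varepsilon/3$. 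Combining with Step~1 applied to $g$,
\[
\|f(S_n) - f(S)\|_{\cB(\cH)} \leq 2 \|f - g\|_{L^\infty(\bbR)} + \|g(S_n) - g(S)\|_{\cB(\cH)} < \varepsilon
\]
for all sufficiently large $n$, which is the claim.

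The argument is essentially mechanical, and no step presents a real obstacle; the one place where a small check is required is the finiteness of $C(f, z_0)$ for $f \in C_0^{\infty}(\bbR)$, which rests entirely on the $O(|y|^\ell)$ decay \eqref{2.7} of the $\overline{\partial}$-derivative of the almost analytic extension---the defining property that makes \eqref{2.9} a meaningful functional calculus in the first place.
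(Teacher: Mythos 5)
Your proposal is correct and follows essentially the same route as the paper, which proves Lemma \ref{l3.2} precisely by combining the bound of Lemma \ref{l3.1} (applied to the dense subclass, with finiteness of the constant guaranteed by \eqref{2.7} and the compact support of $\widetilde{f}_{2,\sigma}$) with a Stone--Weierstrass/density argument and the sup-norm bound $\|h(S)\|_{\cB(\cH)} \leq \|h\|_{L^{\infty}(\bbR)}$. Nothing further is needed.
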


\begin{remark} \lb{r3.3}
We note that the functional calculus based on almost analytic extensions is 
not the only possible approach to address estimates such as \eqref{3.3} and \eqref{3.5}. 
As a powerful alternative we mention the theory of double operator integrals (DOI),  
which can prove stronger inequalities of the following type (cf.\ \cite{BS03}, \cite{CGLPSZ14b}, \cite{Ya05}): Given $m \in \bbN$ odd and  $p\in [1,\infty)$, there exist constants 
$a_1,a_2 \in \mathbb{R} \backslash \{0\}$ and $C=C(f,m, a_1, a_2) \in (0,\infty)$ such that 
\begin{align} 
\begin{split} 
& \big\|f(A)-f(B)\|_{\cB_p(\cH)}\leq C\big(\big\|(A-a_1iI_{\cH})^{-m} - (B-a_1iI_{\cH})^{-m}\big\|_{\cB_p(\cH)}         \lb{3.9} \\
&\quad +\big\|(A-a_2iI_{\cH})^{-m} - (B-a_2iI_{\cH})^{-m}\big\|_{\cB_p(\cH)}\big), 
\quad f \in C^{\infty}_0(\bbR),   
\end{split} 
\end{align}
which permits the use of differences of higher powers 
$m \in \mathbb{N}$ of resolvents to control the $\| \cdot \|_{\mathcal{B}_p(\mathcal{H})}$-norm of the left-hand side $[f(A)-f(B)]$ for $f \in C^{\infty}_0(\bbR)$. In fact, this extends to a much larger class of functions $f$, see \cite{CGLNPS16} for details. 

Moreover, repeatedly differentiating 
\begin{equation}
f(\lambda) = \pi^{-1} \int_{\bbC} dx dy \, 
\f{\partial \wti f_{\ell,\sigma}}{\partial \ol z}(z) (\lambda - z)^{-1}, \quad 
\lambda \in \bbR,   \lb{3.10} 
\end{equation}
with respect to $\lambda$ yields
\begin{equation}
f^{(m-1)}(S) = \pi^{-1} (-1)^{m-1} (m-1)! \int_{\bbC} dx dy \, 
\f{\partial \wti f_{\ell,\sigma}}{\partial \ol z}(z) (S - z)^{-m}, \quad 
\lambda \in \bbR.     \lb{3.11} 
\end{equation}
This leads to estimates of the type \eqref{3.9} but with $f$ replaced by $f^{(m-1)}$.  
\hfill $\diamond$
\end{remark}

We recall a useful result:

\begin{lemma} \lb{l3.4}
Let $p\in[1,\infty)$ and assume that $R,R_n,T,T_n\in\cB(\cH)$, 
$n\in\bbN$, satisfy
$\slim_{n\to\infty}R_n = R$  and $\slim_{n\to \infty}T_n = T$ and that
$S,S_n\in\cB_p(\cH)$, $n\in\bbN$, satisfy 
$\lim_{n\to\infty}\|S_n-S\|_{\cB_p(\cH)}=0$.
Then $\lim_{n\to\infty}\|R_n S_n T_n^\ast - R S T^\ast\|_{\cB_p(\cH)}=0$.
\end{lemma}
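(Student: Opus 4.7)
The plan is to prove this by the standard ``add and subtract'' decomposition combined with a finite-rank approximation in $\cB_p(\cH)$. First I would invoke the uniform boundedness principle to deduce from the strong convergence of $\{R_n\}$ and $\{T_n\}$ that $M := \sup_{n\in\bbN}\max\{\|R_n\|_{\cB(\cH)},\|T_n\|_{\cB(\cH)}\} < \infty$. Then I write the telescoping identity
\begin{equation}
R_nS_nT_n^{\ast} - RST^{\ast} = R_n(S_n-S)T_n^{\ast} + (R_n-R)ST_n^{\ast} + RS(T_n^{\ast}-T^{\ast}),
\end{equation}
and estimate each of the three terms separately in $\|\cdot\|_{\cB_p(\cH)}$.

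The first term is the easiest: the ideal property gives
$\|R_n(S_n-S)T_n^{\ast}\|_{\cB_p(\cH)} \le M^2 \|S_n-S\|_{\cB_p(\cH)} \to 0$.

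For the second term, I would fix $\varepsilon>0$ and use density of finite-rank operators in $\cB_p(\cH)$ (here $p<\infty$ is crucial) to pick $S_\varepsilon = \sum_{k=1}^{K}(\phi_k,\cdot)_{\cH}\psi_k$ with $\|S-S_\varepsilon\|_{\cB_p(\cH)}<\varepsilon$. Then
\begin{equation}
\|(R_n-R)ST_n^{\ast}\|_{\cB_p(\cH)} \le \|R_n-R\|_{\cB(\cH)}\,\varepsilon\, M + \|(R_n-R)S_\varepsilon T_n^{\ast}\|_{\cB_p(\cH)},
\end{equation}
and a direct computation shows $(R_n-R)S_\varepsilon T_n^{\ast} = \sum_{k=1}^{K}(T_n\phi_k,\cdot)_{\cH}(R_n-R)\psi_k$, so its $\cB_p(\cH)$-norm is bounded by $\sum_{k=1}^{K}\|T_n\phi_k\|_{\cH}\|(R_n-R)\psi_k\|_{\cH}$, which tends to $0$ as $n\to\infty$ by strong convergence $R_n\to R$ (applied to the finite set $\{\psi_k\}$) together with the uniform bound on $\|T_n\phi_k\|_{\cH}$. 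Since $\varepsilon$ is arbitrary and $\|R_n-R\|_{\cB(\cH)}\le 2M$, the second term converges to $0$.

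For the third term I would argue symmetrically: taking adjoints preserves the $\cB_p(\cH)$-norm, so $\|RS(T_n^{\ast}-T^{\ast})\|_{\cB_p(\cH)} = \|(T_n-T)S^{\ast}R^{\ast}\|_{\cB_p(\cH)}$, and the same finite-rank argument applies (approximating $S^{\ast}$ in $\cB_p(\cH)$ and using strong convergence $T_n\to T$ on a finite set of vectors). The only delicate point is that \emph{strong} convergence $T_n\to T$ need \emph{not} imply strong convergence $T_n^{\ast}\to T^{\ast}$, which is precisely why the naive step of applying $(T_n^{\ast}-T^{\ast})$ directly to vectors fails and the adjoint trick (or equivalently, placing $(T_n-T)$ on the \emph{left} via the adjoint) is essential; this is the main obstacle one has to keep in mind. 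Combining the three estimates via the triangle inequality completes the proof.
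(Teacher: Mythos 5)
Your proof is correct. Note, however, that the paper does not actually prove Lemma \ref{l3.4} at all: it simply refers to Gr\"umm's theorem \cite[Theorem 1]{Gr73} (see also \cite[p.~28--29]{Si05}, \cite[Lemma 6.1.3]{Ya92}) ``with a minor additional effort (taking adjoints, etc.).'' What you have written is, in effect, a self-contained proof of that cited result: the uniform bound from the uniform boundedness principle, the three-term telescoping decomposition, the ideal property for the term carrying $S_n-S$, and the finite-rank approximation in $\cB_p(\cH)$ (legitimate since $p<\infty$) to convert strong operator convergence into $\cB_p$-convergence on the remaining two terms. In particular, your handling of the third term is exactly the ``minor additional effort'' the paper alludes to: since $\slim_{n\to\infty}T_n=T$ does not give $\slim_{n\to\infty}T_n^{\ast}=T^{\ast}$, one must pass to adjoints, using $\|RS(T_n^{\ast}-T^{\ast})\|_{\cB_p(\cH)}=\|(T_n-T)S^{\ast}R^{\ast}\|_{\cB_p(\cH)}$, and then run the finite-rank argument with $T_n-T$ acting on the left; you identified and resolved this correctly. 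The rank-one computation $(R_n-R)S_\varepsilon T_n^{\ast}=\sum_{k=1}^{K}(T_n\phi_k,\cdot)_{\cH}(R_n-R)\psi_k$ and the bound $\|(u,\cdot)_{\cH}v\|_{\cB_p(\cH)}=\|u\|_{\cH}\|v\|_{\cH}$ are also used correctly, so the argument is complete; the only difference from the paper is that you prove from scratch what the paper outsources to the literature, which makes your version more elementary and self-contained at the cost of length.
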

This follows, for instance, from \cite[Theorem 1]{Gr73}, \cite[p.\ 28--29]{Si05}, or
\cite[Lemma 6.1.3]{Ya92} with a minor additional effort (taking adjoints, etc.).

Next, we describe a typical convergence result: 

\begin{theorem} \lb{t3.5}
Let $S_{j,n}$, $n \in \bbN$, and $S_j$, $j=1,2$, be self-adjoint in $\cH$, and assume 
that $S_{j,n}$ converges in strong resolvent sense as $n \to \infty$ to $S_j$, $j=1,2$,  
respectively. \\
$(i)$ Suppose that for some $($and hence for all\,$)$ 
$z_0 \in \bbC \backslash \bbR$, 
\begin{align}
\begin{split} 
& \lim_{n \to \infty} \big\|\big[(S_{2,n} - z_0 I_{\cH})^{-1} - (S_2 - z_0 I_{\cH})^{-1}\big]   \\
& \hspace*{1cm} 
- \big[(S_{1,n} - z_0 I_{\cH})^{-1} - (S_1 - z_0 I_{\cH})^{-1}\big] \big\|_{\cB(\cH)} 
=0.    \lb{3.12} 
\end{split} 
\end{align}  
Then,
\begin{equation}
\lim_{n \to \infty} \|[f(S_{2,n}) - f(S_{1,n})] - [f(S_2) - f(S_1)]\|_{\cB(\cH)} = 0, \quad 
f \in C_0^{\infty}(\bbR).    \lb{3.13} 
\end{equation}
$(ii)$ Let $p \in [1,\infty)$ and suppose that for some $($and hence for all\,$)$ 
$z_0 \in \bbC \backslash \bbR$, 
\begin{align} 
& \big[(S_{2,n} - z_0 I_{\cH})^{-1} - (S_{1,n} - z_0 I_{\cH})^{-1}\big], 
\big[(S_2 - z_0 I_{\cH})^{-1} - (S_1 - z_0 I_{\cH})^{-1}\big] \in \cB_p(\cH),    \no \\
& \hspace*{10cm} n \in \bbN,    \lb{3.14} 
\end{align}
and 
\begin{align}
\begin{split} 
& \lim_{n \to \infty} \big\|\big[(S_{2,n} - z_0 I_{\cH})^{-1} - (S_2 - z_0 I_{\cH})^{-1}\big]  \\
& \hspace*{1cm} 
- \big[(S_{1,n} - z_0 I_{\cH})^{-1} - (S_1 - z_0 I_{\cH})^{-1}\big] \big\|_{\cB_p(\cH)} 
=0.    \lb{3.15} 
\end{split} 
\end{align}  
Then, 
\begin{equation}
\lim_{n \to \infty} \|[f(S_{2,n}) - f(S_{1,n})] - [f(S_2) - f(S_1)]\|_{\cB_p(\cH)} = 0, \quad 
f \in C_0^{\infty}(\bbR).    \lb{3.16} 
\end{equation}   
\end{theorem}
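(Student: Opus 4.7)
The plan is to represent the quantity of interest as a Davies--Helffer--Sj\"ostrand integral whose operator-valued integrand tends to zero pointwise in the appropriate norm, and then to conclude via dominated convergence. I would begin by subtracting two copies of \eqref{3.6} to obtain
\begin{equation*}
[f(S_{2,n}) - f(S_{1,n})] - [f(S_2) - f(S_1)] = \pi^{-1} \int_{\bbC} dx\, dy\, \frac{\partial \wti{f}_{2,\sigma}}{\partial \overline{z}}(z)\, D_n(z),
\end{equation*}
where $D_n(z)$ denotes the iterated resolvent difference $\bigl[(S_{2,n} - zI_{\cH})^{-1} - (S_{1,n} - zI_{\cH})^{-1}\bigr] - \bigl[(S_2 - zI_{\cH})^{-1} - (S_1 - zI_{\cH})^{-1}\bigr]$. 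It then suffices to show that $\|D_n(z)\|$ converges to zero pointwise in $z \in \bbC \setminus \bbR$ in the appropriate norm, dominated by an integrable envelope.

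Next I would invoke the Lemma~\ref{l3.1} factorization $(S_2 - zI_{\cH})^{-1} - (S_1 - zI_{\cH})^{-1} = \alpha_2(z)\, B\, \alpha_1(z)$, where $\alpha_j(z) = (S_j - z_0 I_{\cH})(S_j - zI_{\cH})^{-1} = I_{\cH} + (z-z_0)(S_j - zI_{\cH})^{-1}$ and $B = (S_2 - z_0 I_{\cH})^{-1} - (S_1 - z_0 I_{\cH})^{-1}$, together with the analogous identity for the $n$-indexed quantities $\alpha_{j,n}, B_n$. This yields the telescoping decomposition
\begin{equation*}
D_n(z) = \alpha_{2,n}(z)(B_n - B)\alpha_{1,n}(z) + \bigl[\alpha_{2,n}(z)\, B\, \alpha_{1,n}(z) - \alpha_2(z)\, B\, \alpha_1(z)\bigr].
\end{equation*}
The first summand has norm at most $\|\alpha_{2,n}(z)\|\,\|B_n - B\|\,\|\alpha_{1,n}(z)\|$; since $\|\alpha_{j,n}(z)\|$ is bounded uniformly in $n$ by roughly $[|z_0|^2 + |z|^2]^{1/2}/|\Im(z)|$ via the spectral theorem, this tends pointwise to zero by the hypothesized convergence of $B_n - B$.

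For the cross term in part~(ii) I would apply Lemma~\ref{l3.4} with $R_n = \alpha_{2,n}(z)$, $S = B$ (constant in $n$), and $T_n = \alpha_{1,n}(z)^*$: the strong resolvent convergence $S_{j,n} \to S_j$ gives $\alpha_{2,n}(z) \to \alpha_2(z)$ strongly, while applying strong resolvent convergence at $\overline{z}$ gives $\alpha_{1,n}(z)^* \to \alpha_1(z)^*$ strongly; hence, since $B \in \cB_p(\cH)$, Lemma~\ref{l3.4} yields $\|\alpha_{2,n} B \alpha_{1,n} - \alpha_2 B \alpha_1\|_{\cB_p(\cH)} \to 0$ pointwise in $z$. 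Combining the two summands, $\|D_n(z)\|_{\cB_p(\cH)} \to 0$ pointwise, while the uniform bound $\|D_n(z)\|_{\cB_p(\cH)} \leq C [|z_0|^2 + |z|^2]/|\Im(z)|^2$ with $C$ depending only on $\sup_n \|B_n\|_{\cB_p(\cH)} + \|B\|_{\cB_p(\cH)} < \infty$ furnishes an envelope. Since $|\partial \wti{f}_{2,\sigma}/\partial \overline{z}|$ vanishes to order $|y|^2$ near $\bbR$ and has compact support in $\bbC$, this envelope is integrable against the weight, and dominated convergence delivers \eqref{3.16}. Part~(i) is proved along the same template with $\cB_p(\cH)$ replaced by $\cB(\cH)$, yielding \eqref{3.13}.

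The principal obstacle I anticipate is in part~(i), namely the pointwise operator-norm convergence of the cross term, since Lemma~\ref{l3.4} genuinely uses the compactness inherent in $\cB_p(\cH)$ for $p < \infty$ and does not transfer verbatim to merely bounded $B$. A workable bypass is to apply the factorization a second time, writing $(S_{j,n} - zI_{\cH})^{-1} - (S_j - zI_{\cH})^{-1} = \alpha_{j,n}(z) P_n^{(j)} \alpha_j(z)$ with $P_n^{(j)} := (S_{j,n} - z_0 I_{\cH})^{-1} - (S_j - z_0 I_{\cH})^{-1}$, and then exploit the identity $B_n - B = P_n^{(2)} - P_n^{(1)}$ together with the uniform $\cB(\cH)$-boundedness of the $P_n^{(j)}$ to reorganize $D_n(z)$ so that only norm-convergent combinations remain.
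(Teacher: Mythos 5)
Your part~(ii) is correct and coincides with the paper's own argument: the same integral representation obtained by subtracting two copies of \eqref{3.6}, the same factorization \eqref{A.-1} splitting $D_n(z)$ into $\alpha_{2,n}(z)(B_n-B)\alpha_{1,n}(z)$ plus a cross term treated with Lemma \ref{l3.4} (using strong resolvent convergence at $z$ and $\overline z$), the same $n$-independent majorant of order $[|z_0|^2+|z|^2]/|\Im(z)|^2$, and dominated convergence.

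The genuine gap is in part~(i), exactly where you flagged it, and the proposed bypass cannot close it. After your second factorization, the terms remaining besides $\alpha_{2,n}(z)\big(P_n^{(2)}-P_n^{(1)}\big)\alpha_2(z)$ are of the form $(\alpha_{2,n}-\alpha_{1,n})P_n^{(1)}\alpha_2$ and $\alpha_{1,n}P_n^{(1)}(\alpha_2-\alpha_1)$, in which $P_n^{(1)}\to 0$ only strongly and is multiplied on both sides by merely bounded, in general non-compact, operators; no reorganization that uses only $\|P_n^{(2)}-P_n^{(1)}\|_{\cB(\cH)}\to 0$ at the single point $z_0$ can succeed, because the single-$z_0$ hypothesis genuinely does not propagate in $\cB(\cH)$. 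Concretely, on $\ell^2(\bbN)$ let all operators be diagonal, $S_1=-I_{\cH}$, $S_2=2I_{\cH}$, and let $S_{1,n}$, $S_{2,n}$ agree with $S_1$, $S_2$ except in the $n$-th coordinate, where their entries are $-1/2$ and $1$, respectively. Then $S_{j,n}\to S_j$ strongly, hence in strong resolvent sense, and since $\tfrac{1}{1-i}-\tfrac{1}{2-i}=\tfrac{1}{-1/2-i}-\tfrac{1}{-1-i}=\tfrac{1}{10}+\tfrac{3}{10}i$, the operator in \eqref{3.12} vanishes identically for $z_0=i$; yet at $z=2i$ the corresponding diagonal entry is a fixed nonzero number, and for real $f\in C_0^\infty(\bbR)$ with $f(1)=1$, $f(2)=f(-1/2)=f(-1)=0$ the norm in \eqref{3.13} equals $1$ for every $n$. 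Thus part~(i) read with the hypothesis at a single $z_0$ is false, so your bypass (and any other) must fail; the statement your dominated-convergence scheme actually proves is the one with \eqref{3.12} assumed for all $z\in\bbC\backslash\bbR$, where no factorization is even needed since $\|D_n(z)\|_{\cB(\cH)}\le 4/|\Im(z)|$ already serves as the majorant. The upgrade from one $z_0$ to all $z$ is legitimate precisely in the trace-ideal setting of part~(ii), where $B\in\cB_p(\cH)$ makes Lemma \ref{l3.4} available; note that the paper's own proof of part~(i) also invokes Lemma \ref{l3.4} for this step, so your suspicion points to a weakness in the paper's argument for (i), not merely in your bypass.
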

\begin{proof}
As usual, a combination of identity \eqref{A.-1}, Lemma \ref{l3.4}, and the assumed strong 
resolvent convergence of $S_{j,n}$ to $S_j$ as $n \to \infty$, $j=1,2$, proves sufficiency of 
the conditions \eqref{3.12} and \eqref{3.14} for just one $z_0 \in \bbC \backslash \bbR$. 
Thus, assumption \eqref{3.12} actually implies 
\begin{align}
\begin{split} 
& \lim_{n \to \infty} \big\|\big[(S_{2,n} - z I_{\cH})^{-1} - (S_2 - z I_{\cH})^{-1}\big]   \\
& \hspace*{1cm} 
- \big[(S_{1,n} - z I_{\cH})^{-1} - (S_1 - z I_{\cH})^{-1}\big] \big\|_{\cB(\cH)} 
=0, \quad z \in \bbC\backslash \bbR.    \lb{3.17} 
\end{split} 
\end{align}  

Next, mimicking \eqref{3.6}, one obtains,
\begin{align}
& [f(S_{2,n}) - f(S_{1,n})] - [f(S_2) - f(S_1)]   \no \\ 
& \quad = \pi^{-1} \int_{\bbC} dx dy \, 
\f{\partial \wti f_{2,\sigma}}{\partial \ol z}(z) \Big[ \big[(S_{2,n} - z I_{\cH})^{-1}
- (S_{1,n} - z I_{\cH})^{-1}\big]     \lb{3.18} \\ 
& \hspace*{4.2cm} - \big[(S_2 - z I_{\cH})^{-1} - (S_1 - z I_{\cH})^{-1}\big]\Big], 
\quad f \in C_0^{\infty}(\bbR),     \no 
\end{align} 
and hence,
\begin{align}
& \lim_{n \to \infty} \|[f(S_{2,n}) - f(S_{1,n})] - [f(S_2) - f(S_1)] \|_{\cB(\cH)}  \no \\ 
& \quad \leq \pi^{-1} \lim_{n \to \infty} \int_{\bbC} dx dy \, 
\bigg|\f{\partial \wti f_{2,\sigma}}{\partial \ol z}(z)\bigg|  \Big\|\big[(S_{2,n} - z I_{\cH})^{-1}
- (S_{1,n} - z I_{\cH})^{-1}\big]     \lb{3.19} \\ 
& \hspace*{3cm} - \big[(S_2 - z I_{\cH})^{-1} - (S_1 - z I_{\cH})^{-1}\big]\Big\|_{\cB(\cH)}, 
\quad f \in C_0^{\infty}(\bbR).     \no 
\end{align} 
In this context one observes that $f \in C_0^{\infty}(\bbR)$ implies 
$\wti f_{2,\sigma} \in C_0^{\infty}(\bbC)$. Since the exceptional set $\bbR$ in \eqref{3.17} 
has $dxdy$-measure 
zero (in addition to the fact that by \eqref{2.8}, $\partial \wti f_{2,\sigma}/ \partial \ol z$ vanishes 
on $\bbR$), an application of the Lebesgue dominated convergence theorem to interchange the limit $n \to \infty$ with the integral on the right-hand side of \eqref{3.19} requires establishing 
an $n$-independent integrable majorant of the integrand in \eqref{3.19}. Employing identity 
\eqref{A.-1} and estimate \eqref{A.0}, this majorant can be obtained as follows:
\begin{align}
& \big\|\big[(S_{2,n} - z I_{\cH})^{-1} - (S_{1,n} - z I_{\cH})^{-1}\big] 
- \big[(S_2 - z I_{\cH})^{-1} - (S_1 - z I_{\cH})^{-1}\big]\big\|_{\cB(\cH)}    \no \\
& \quad \leq \big\|(S_{2,n} - z I_{\cH})^{-1} - (S_{1,n} - z I_{\cH})^{-1}\big\|_{\cB(\cH)}    \no \\ 
& \qquad + \big\|(S_2 - z I_{\cH})^{-1} - (S_1 - z I_{\cH})^{-1}\big\|_{\cB(\cH)}    \no \\
& \quad \leq \big\|(S_{2,n} - z_0 I_{\cH})(S_{2,n} - z I_{\cH})^{-1}\big\|_{\cB(\cH)}   \no \\
& \qquad \quad \times \big\|(S_{2,n} - z_0 I_{\cH})^{-1} - (S_{1,n} - z_0 I_{\cH})^{-1}\big\|_{\cB(\cH)} 
\no \\ 
& \qquad \quad \times \big\|(S_{1,n} - z_0 I_{\cH})(S_{1,n} - z I_{\cH})^{-1}\big\|_{\cB(\cH)}      \no \\ 
& \qquad + \big\|(S_2 - z_0 I_{\cH})(S_2 - z I_{\cH})^{-1}\big\|_{\cB(\cH)}   \no \\
& \qquad \quad \times \big\|(S_2 - z_0 I_{\cH})^{-1} - (S_1 - z_0 I_{\cH})^{-1}\big\|_{\cB(\cH)} 
\no \\
& \qquad \quad \times \big\|(S_1 - z_0 I_{\cH})(S_1 - z I_{\cH})^{-1}\big\|_{\cB(\cH)}      \no \\
& \quad \leq 8 \big[|z_0|^2 + |z|^2\big] |\Im(z)|^{-2} 
\Big[\big\|(S_{2,n} - z_0 I_{\cH})^{-1} - (S_{1,n} - z_0 I_{\cH})^{-1}\big\|_{\cB(\cH)}    \no \\ 
& \hspace*{4.5cm} + \big\|(S_2 - z_0 I_{\cH})^{-1} - (S_1 - z_0 I_{\cH})^{-1}\big\|_{\cB(\cH)}\Big] 
\no \\
& \quad \leq 8 \big[|z_0|^2 + |z|^2\big] |\Im(z)|^{-2} C(z_0),     \lb{3.20} 
\end{align}
for some $0 < C(z_0) < \infty$, independent of $n \in \bbN$, since by assumption \eqref{3.12}, 
\begin{equation}
\big\|(S_{2,n} - z_0 I_{\cH})^{-1} - (S_{1,n} - z_0 I_{\cH})^{-1}\big\|_{\cB(\cH)} \leq \wti C(z_0) 
\lb{3.21}
\end{equation}
for some $0 < \wti C(z_0) < \infty$, independent of $n \in \bbN$. Together with the 
properties \eqref{2.7}, \eqref{2.8} of $\partial \wti f_{2,\sigma}/ \partial \ol z$, this establishes 
the sought integrable majorant, independent of $n \in \bbN$, and thus permits the interchange of 
the limit $n \to \infty$ with the integral on the right-hand side of \eqref{3.19}. This completes 
the proof of \eqref{3.13}.

The proof of \eqref{3.16} proceeds exactly along the same lines employing once more the 
ideal properties of $\cB_p(\cH)$, $p \in [1,\infty)$.  
\end{proof}

We remark in passing that double operator integral techniques permit one to enlarge 
the class of functions $f$ to which Theorem \ref{t3.5} applies (cf.\ also Remark \ref{r3.8}).  

\begin{remark} \lb{r3.5a}
The proof to Theorem \ref{t3.5} uses dominated convergence and given \eqref{3.20}, the crucial observation is that
\begin{equation}\lb{D.1}
\int_{\bbC}dxdy\, \bigg|\frac{\partial \widetilde{f}_{\ell,\sigma}}{\partial \overline{z}}(z) 
\bigg|\frac{|z_0|^2+|z|^2}{|\Im(z)|^2}<\infty,
\end{equation}
which is obvious if $f\in C_0^{\infty}(\bbR)$, since then $\widetilde{f}_{\ell,\sigma}$ is compactly supported. However, it is possible to once again prove \eqref{D.1} if $f\in S^{\beta}(\bbR)$ for some $\beta<-1$. Indeed, following \cite[p.~25]{Da95}, and setting
\begin{equation}
U = \{(x,y)\, |\, \langle x \rangle < |y| < 2\langle x \rangle\},\quad V = \{(x,y)\, |\, 0 < |y| < 2\langle x \rangle\},
\end{equation}
one infers
\begin{equation}
|\sigma_x + i \sigma_y|\leq c \langle x \rangle^{-1}\chi_U(x,y),\quad z=x+iy\in \bbC,
\end{equation}
for some constant $c>0$.  Then for $f\in S^{\beta}(\bbR)$,
\begin{align}
\begin{split} 
\bigg|\frac{\partial \widetilde{f}_{\ell,\sigma}}{\partial \overline{z}}(z)\bigg|
&\leq C\Bigg\{\sum_{k=0}^{\ell}\langle x \rangle ^{\beta-k-1}|y|^k \chi_U(x,y)\Bigg\} + C\langle x\rangle^{\beta-\ell-1}|y|^{\ell}\chi_V(x,y),     \\
&\hspace*{6.4cm}\quad z=x+iy\in \bbC,  
\end{split} 
\end{align}
where $C>0$ is an appropriate constant.  Therefore,
\begin{align}
\bigg|\frac{\partial \widetilde{f}_{\ell,\sigma}}{\partial \overline{z}}(z) \bigg|
\frac{|z_0|^2+|z|^2}{|\Im(z)|^2}&\leq C\Bigg\{\sum_{k=0}^{\ell}\langle x \rangle ^{\beta-k-1}|y|^k \chi_U(x,y) \frac{|z_0|^2+|x|^2+|y|^2}{|y|^2}\Bigg\} \no\\
&\quad + C\langle x\rangle^{\beta-\ell-1}|y|^{\ell}\chi_V(x,y) \frac{|z_0|^2+|x|^2+|y|^2}{|y|^2}\no\\
&\leq C2^{\ell}\Bigg\{\sum_{k=0}^{\ell}\langle x \rangle ^{\beta-1} \chi_U(x,y) \frac{|z_0|^2+5\langle x\rangle^2}{\langle x\rangle^2}\Bigg\} \no\\
&\quad + C\langle x\rangle^{\beta-\ell-1}|y|^{\ell-2}\chi_V(x,y) \big\{|z_0|^2+5\langle x\rangle^2\big\}\no\\
&\leq C2^{\ell}\Bigg\{\sum_{k=0}^{\ell}\langle x \rangle ^{\beta-3} \chi_U(x,y) \big\{|z_0|^2+5\langle x\rangle^2\big\}\Bigg\} \no\\
&\quad + C2^{\ell-2}\langle x\rangle^{\beta-3}\chi_V(x,y) \big\{|z_0|^2+5\langle x\rangle^2\big\}\no\\
&\leq \widehat{C}\langle x\rangle^{\beta-1}\big\{\chi_U(x,y) + \chi_V(x,y) \big\},\quad z=x+iy\in \bbC\backslash \bbR,\lb{D.5}
\end{align}
where $\widehat{C} =\widehat{C}(z_0) >0$ is a constant.  Here, we used $|z_0|^2+5\langle x\rangle^2< \widetilde{C}\langle x\rangle^2$, for an appropriate constant $\widetilde{C}=\widetilde{C}(z_0)>0$. Given \eqref{D.5}, \eqref{D.1} holds if $\beta<-1$ since
\begin{align}
&\int_{\bbC}dxdy\, \langle x\rangle^{\beta-1}\big\{\chi_U(x,y) + \chi_V(x,y) \big\} \no\\
&\quad = \int_{-\infty}^{\infty}dx\, \langle x\rangle^{\beta-1}\,  \int_{-\infty}^{\infty}dy\, \big\{\chi_U(x,y) + \chi_V(x,y) \big\}\no\\
&\quad = 2 \int_{-\infty}^{\infty}dx\, \langle x\rangle^{\beta-1}\, \bigg\{ \int_0^{2\langle x\rangle}dy + \int_{\langle x\rangle}^{2\langle x\rangle}dy\bigg\}\no\\
&\quad = 6\int_{-\infty}^{\infty}dx\, \langle x\rangle^{\beta}<\infty.
\end{align}
Thus, the majorant \eqref{D.5} is integrable as long as $f\in S^{\beta}(\bbR)$ for some $\beta<-1$ 
and hence Theorem \ref{t3.5} extends from $C_0^{\infty}(\bbR)$ to $S^{\beta}(\bbR)$, $\beta<-1$.  
\end{remark}

Next, we briefly apply Theorem \ref{t3.5} to a concrete $(1+1)$-dimensional example 
treated in great detail in \cite{CGLPSZ14} and \cite{CGLPSZ14a} by alternative methods. 

\begin{example} \lb{e3.7} 
Assuming the real-valued functions $\phi, \theta$ satisfy  
\begin{align}
& \phi \in AC_{\loc}(\bbR) \cap L^{\infty}(\bbR) \cap L^1(\bbR),  
\; \phi' \in L^{\infty}(\bbR),     \\
\begin{split} 
& \theta \in AC_{\loc}(\bbR) \cap L^{\infty}(\bbR), \; 
\theta' \in L^{\infty}(\bbR) \cap L^1(\bbR),      \\
& \lim_{t \to \infty} \theta (t) = 1, \; \lim_{t \to - \infty} \theta (t) = 0,  
\end{split} 
\end{align} 
we introduce the family of self-adjoint operators $A(t)$, $t \in \bbR$, in $L^2(\bbR)$, 
\begin{equation}
A(t) = - i \f{d}{dx} + \theta(t) \phi, \quad 
\dom(A(t)) = W^{1,2}(\bbR), \; t \in \bbR,     \lb{9.3} 
\end{equation}
and its self-adjoint asymptotes as $t \to \pm \infty$, 
\begin{equation} 
A_+ = - i \f{d}{dx} + \phi, \quad A_- =  - i \f{d}{dx}, \quad 
\dom(A_{\pm}) = W^{1,2}(\bbR).  
\end{equation}
In addition, we introducing the operator $d/dt$ in $L^2\big(\bbR; dt; L^2(\bbR;dx)\big)$  by 
\begin{align}
& \bigg(\f{d}{dt}f\bigg)(t) = f'(t) \, \text{ for a.e.\ $t\in\bbR$,}    \no \\
& \, f \in \dom(d/dt) = \big\{g \in L^2\big(\bbR;dt;L^2(\bbR)\big) \, \big|\,
g \in AC_{\loc}\big(\bbR; L^2(\bbR)\big), \\
& \hspace*{6.3cm} g' \in L^2\big(\bbR;dt;L^2(\bbR)\big)\big\}   \no \\
& \hspace*{2.3cm} = W^{1,2} \big(\bbR; dt; L^2(\bbR; dx)\big).      \label{2.ddtR} 
\end{align} 
Next, we agree to identify $L^2\big(\bbR; dt; L^2(\bbR; dx)\big)$ with $L^2(\bbR^2; dt dx)$ 
$($denoting the latter by $L^2(\bbR^2)$ for brevity\,$)$ and introduce $\bsD_\bsA^{}$ in $L^2(\bbR^2)$ by   
\begin{equation}
\bsD_\bsA^{} = \f{d}{dt} + \bsA,
\quad \dom(\bsD_\bsA^{})= W^{1,2}(\bbR^2),    
\end{equation}
with $\bsA$ defined as in \eqref{1.2} and $A(t)$, $t \in \bbR$, given by \eqref{9.3}. Moreover, 
we introduce the nonnegative, self-adjoint operators $\bsH_j$, $j=1,2$, in $L^2(\bbR^2)$ by
\begin{equation}
\bsH_1 = \bsD_{\bsA}^{*} \bsD_{\bsA}^{}, \quad 
\bsH_2 = \bsD_{\bsA}^{} \bsD_{\bsA}^{*}.
\end{equation} 

As shown in \cite{CGLPSZ14}, the assumptions on $\phi$ and $\theta$ guarantee that \begin{equation}
\big[(A_+ - z I)^{-1} - (A_- - z I)^{-1}\big] \in \cB_1\big(L^2(\bbR)\big), 
\quad z \in \bbC \backslash \bbR     \lb{9.13} 
\end{equation}
$($for simplicity, we adopt the abbreviation $I = I_{L^2(\bbR)}$ throughout this example\,$)$,   
and thus, the spectral shift function $\xi(\, \cdot \, ; A_+, A_-)$ for the pair $(A_+, A_-)$ exists 
and is well-defined up to an arbitrary additive real constant, satisfying 
\begin{equation}
\xi(\, \cdot \, ; A_+, A_-) \in L^1\big(\bbR; (\nu^2 + 1)^{-1} d\nu\big). 
\end{equation}
Introducing $\chi_n(A_-) = n (A_-^2 + n^2 I)^{-1/2}$ and 
$A_{+,n} = A_- + \chi_n(A_-) \phi \chi_n(A_-)$, $n \in \bbN$, the fact 
\begin{equation}
A_{+,n} - A_- = \chi_n(A_-) \phi \chi_n(A_-) \in \cB_1\big(L^2(\bbR)\big), \quad n \in \bbN, 
\end{equation}
implies that also the spectral shift functions 
$\xi(\, \cdot \, ; A_{+,n}, A_-)$, $n \in \bbN$, exist and are uniquely determined by 
\begin{equation}
\xi(\, \cdot \, ; A_{+,n}, A_-) \in L^1(\bbR; d\nu), \quad n \in \bbN. 
\end{equation}
In fact, as has been shown in \cite{CGLPSZ14}, the open constant in $\xi(\, \cdot \, ; A_+, A_-)$ 
can naturally be determined via the limiting procedure 
\begin{equation}
\lim_{n \to \infty} \xi(\nu; A_{+,n}, A_-) = \f{1}{2 \pi} \int_{\bbR} dx \, \phi(x) 
= \xi(\nu; A_+, A_-), \quad \nu \in \bbR.     \lb{9.14} 
\end{equation}
In particular, $\xi(\, \cdot \, ; A_+, A_-)$ turns out to be 
constant in this example. 

Replacing $A(t)$ by $A_n(t) = A_- + \chi_n(A_-) \theta(t) \phi \chi_n(A_-)$, $n \in \bbN$, 
$t \in \bbR$, and hence, $\bsA$ by $\bsA_n$, $\bsD_\bsA^{}$ by $\bsD_{\bsA^{}_n}$, 
$\bsH_j$ by $\bsH_{j,n}$, $j=1,2$, $n \in\bbN$, one verifies the facts,
\begin{align}
& \big[(\bsH_2 - z \, \bsI)^{-1} - (\bsH_1 - z \, \bsI)^{-1}\big] \in 
\cB_1\big(L^2(\bbR^2)\big), \quad z \in \bbC \backslash [0,\infty),      \\
& \big[(\bsH_{2,n}-z \, \bsI)^{-1} - (\bsH_{1,n}-z \, \bsI)^{-1}\big] \in \cB_1\big(L^2(\bbR^2)\big), 
\quad n \in \bbN, \; z \in \bbC \backslash [0,\infty),   
\end{align} 
showing that the spectral shift functions $\xi(\, \cdot \, ; \bsH_2, \bsH_1)$ and 
$\xi(\, \cdot \, ; \bsH_{2,n}, \bsH_{1,n})$ for the pairs $(\bsH_2, \bsH_1)$ and 
$(\bsH_2, \bsH_1)$, $n \in \bbN$, respectively, are well-defined and satisfy 
\begin{equation}
\xi(\, \cdot \, ; \bsH_2, \bsH_1), \, \xi(\, \cdot \, ; \bsH_{2,n}, \bsH_{1,n})
 \in L^1\big(\bbR; (\lambda^2 + 1)^{-1} d\lambda\big), \quad n \in \bbN. 
\end{equation} 
Since $\bsH_j\geq 0$, $\bsH_{j,n} \geq 0$, $n \in \bbN$, $j=1,2$, one uniquely introduces 
$\xi(\,\cdot\,; \bsH_2,\bsH_1)$ and $\xi(\, \cdot \, ; \bsH_{2,n}, \bsH_{1,n})$, $n\in\bbN$, 
by requiring that
\begin{equation}
\xi(\lambda; \bsH_2,\bsH_1) = 0, \quad  
\xi(\, \cdot \, ; \bsH_{2,n}, \bsH_{1,n}) = 0, \quad \lambda < 0, \; n \in \bbN.   
\end{equation}
As shown in \cite{CGLPSZ14}, one can now prove the following intimate connection between 
$\xi(\, \cdot \,; A_{+,n}, A_-)$ and $\xi(\, \cdot \, ; \bsH_{2,n}, \bsH_{1,n})$, $n \in \bbN$, 
the Pushnitski-type formula \cite{GLMST11}, \cite{Pu08}, 
\begin{equation}
\xi(\lambda; \bsH_{2,n}, \bsH_{1,n}) = \f{1}{\pi} \int_{- \lambda^{1/2}}^{\lambda^{1/2}} 
\f{\xi(\nu; A_{+,n}, A_-) d \nu}{(\lambda - \nu^2)^{1/2}} \, \text{ for a.e.~$\lambda > 0$, 
$n\in\bbN$.}    \lb{9.22}
\end{equation}
Moreover, as shown in \cite{CGLPSZ14} and \cite{CGLPSZ14a}, one indeed has the 
convergence property
\begin{align}
\begin{split} 
& \lim_{n\to\infty} \big\|\big[(\bsH_{2,n} - z \, \bsI)^{-1} - (\bsH_{1,n} - z \, \bsI)^{-1}\big]  \\
& \hspace*{1cm} - [(\bsH_2 - z \, \bsI)^{-1} - (\bsH_1 - z \, \bsI)^{-1}\big]
\big\|_{\cB_1(L^2(\bbR^2))} = 0, \quad z \in \bbC \backslash \bbR.    \lb{9.22a}
\end{split} 
\end{align}
Thus, Theorem \ref{t3.5} applies and hence yields,
\begin{equation}
\lim_{n \to \infty} \|[f(\bsH_{2,n}) - f(\bsH_{1,n})] - [f(\bsH_2) - f(\bsH_1)]\|_{\cB_1(L^2(\bbR^2))} 
= 0, \quad f \in C_0^{\infty}(\bbR).    \lb{9.22b} 
\end{equation}   
This, in turn permits one to take the limit $n \to \infty$ in \eqref{9.22}, implying 
\begin{equation}
\xi(\lambda; \bsH_2, \bsH_1) = \f{1}{\pi} \int_{- \lambda^{1/2}}^{\lambda^{1/2}} 
\f{\xi(\nu; A_+, A_-) d \nu}{(\lambda - \nu^2)^{1/2}} \, \text{ for a.e.~$\lambda > 0$, 
$n\in\bbN$.}    \lb{9.22c}
\end{equation}
Equation \eqref{9.22c} combined with 
\eqref{9.14} yields 
\begin{equation}
\xi(\lambda; \bsH_2, \bsH_1) = \xi (\nu; A_+, A_-) = \f{1}{2 \pi} \int_{\bbR} dx \, \phi(x)    \lb{9.23}
\end{equation}
for a.e.~$\lambda > 0$ and a.e.~$\nu \in \bbR$. As a consequence of \eqref{9.23}, the 
semigroup regularized Witten index $W_s(\bsD_\bsA^{})$ of the non-Fredholm operator 
$\bsD_\bsA^{}$ exists and equals 
\begin{equation}
W_s(\bsD_\bsA^{}) = \xi(0_+; \bsH_2, \bsH_1) = 
\xi(0; A_+, A_-) = \f{1}{2 \pi} \int_{\bbR} dx \, \phi(x).     \lb{9.24}
\end{equation}
This yields an alternative proof of the principal Witten index results in \cite{CGLPSZ14} and 
\cite{CGPST14}. 
\end{example}

The following result provides an extension of Theorem \ref{t3.5} to higher powers of 
resolvents (necessitated by applications to $d$-dimensional Dirac-type operators as hinted at 
in the introduction). 
 
\begin{theorem}\lb{t3.7}
Let $S_{j,n}$, $n \in \bbN$, and $S_j$, $j=1,2$, be self-adjoint in $\cH$, and assume that $S_{j,n}$ converges in strong resolvent sense as $n \to \infty$ to $S_j$, $j=1,2$,  respectively.  Suppose that for some $m\in \bbN$ and some 
$p\in [1,\infty)$,
\begin{align}
\big[(S_{2,n}-zI_{\cH})^{-m} - (S_{1,n}-zI_{\cH})^{-m}\big], \big[(S_2-zI_{\cH})^{-m} - (S_1-zI_{\cH})^{-m}\big]\in&\cB_p(\cH),\no\\
z\in \bbC\backslash\bbR,\, n\in \bbN.&       \lb{3.22}
\end{align}
If
\begin{align}
&\lim_{n\rightarrow \infty}\big\|\big[(S_{2,n}-zI_{\cH})^{-m} - (S_{1,n}-zI_{\cH})^{-m}\big] \no\\
&\qquad - \big[(S_2-zI_{\cH})^{-m} - (S_1-zI_{\cH})^{-m}\big]\big\|_{\cB_p(\cH)}=0,\quad z\in \bbC\backslash\bbR, \lb{3.23}
\end{align}
and for some $z_0\in \bbC\backslash \bbR$,
\begin{align}
&(S_{1,n} - z_0I_{\cH})^{-m}\big[(S_{2,n} - z_0I_{\cH})^{-1} - (S_{1,n} - z_0I_{\cH})^{-1} \big],\no\\
&\quad  (S_1 - z_0I_{\cH})^{-m}\big[(S_2 - z_0I_{\cH})^{-1} - (S_1 - z_0)^{-1} \big]\in \cB_p(\cH),\quad n\in \bbN,    \lb{3.23a} 
\end{align}
with
\begin{align}
\lim_{n\rightarrow \infty}&\big\| (S_{1,n} - z_0I_{\cH})^{-m}\big[(S_{2,n} - z_0I_{\cH})^{-1} 
- (S_{1,n} - z_0I_{\cH})^{-1} \big] \no\\
&\quad - (S_1 - z_0I_{\cH})^{-m}\big[(S_2 - z_0I_{\cH})^{-1} - (S_1 - z_0)^{-1} \big]\big\|_{\cB_p(\cH)} = 0,\lb{3.24}
\end{align}
then
\begin{equation}\lb{3.25}
\lim_{n \to \infty} \|[f(S_{2,n}) - f(S_{1,n})] - [f(S_2) - f(S_1)]\|_{\cB_p(\cH)} = 0, \quad 
f \in C_0^{\infty}(\bbR).
\end{equation}
In addition, these results hold upon systematically replacing $\cB_p(\cH)$ by $\cB(\cH)$ in 
\eqref{3.22}--\eqref{3.25}.
\end{theorem}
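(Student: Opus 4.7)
The plan is to apply Khochman's formula (Lemma~\ref{l2.1}) to each of $f(S_{j,n})$ and $f(S_j)$, $j=1,2$, and then to show that the difference of the integrands converges to zero in $\cB_p(\cH)$ pointwise in $z \in \bbC \backslash \bbR$ with an $n$-independent integrable majorant; Lebesgue's dominated convergence theorem then yields \eqref{3.25}. Along the way I would rederive \eqref{2.10} by iterating the first resolvent identity $R(z) = R(z_0) + (z-z_0) R(z_0) R(z)$ a total of $m$ times inside \eqref{2.9} and invoking Stokes' theorem to see that each of the resulting integrals $\int dx\,dy\,(\partial \wti f_{\ell,\sigma}/\partial \overline{z})(z)(z-z_0)^k$ vanishes for $0 \leq k \leq m-1$ (using that $\wti f_{\ell,\sigma}$ has compact support, so the boundary integral at infinity vanishes). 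Throughout, abbreviate $R_{j,n}(z) = (S_{j,n} - zI_\cH)^{-1}$ and $R_j(z) = (S_j - zI_\cH)^{-1}$.

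The core task is to analyze, for each $z \in \bbC \backslash \bbR$,
\[
\Delta_n(z) = \big[R_{2,n}(z_0)^m R_{2,n}(z) - R_{1,n}(z_0)^m R_{1,n}(z)\big] - \big[R_2(z_0)^m R_2(z) - R_1(z_0)^m R_1(z)\big].
\]
Exploiting commutativity of $R_{j,n}(z_0)$ with $R_{j,n}(z)$, the first brace splits as $\big[R_{2,n}(z_0)^m - R_{1,n}(z_0)^m\big] R_{2,n}(z) + R_{1,n}(z_0)^m \big[R_{2,n}(z) - R_{1,n}(z)\big]$, and analogously in the limit. After subtracting its limiting analogue, the first summand telescopes into $\{[R_{2,n}(z_0)^m - R_{1,n}(z_0)^m] - [R_2(z_0)^m - R_1(z_0)^m]\} R_{2,n}(z)$, which tends to zero in $\cB_p(\cH)$ by \eqref{3.23} together with $\|R_{2,n}(z)\| \leq |\Im(z)|^{-1}$, plus $[R_2(z_0)^m - R_1(z_0)^m][R_{2,n}(z) - R_2(z)]$, to which Lemma~\ref{l3.4} applies (using that $[R_2(z_0)^m - R_1(z_0)^m] \in \cB_p(\cH)$ by \eqref{3.22} and that $R_{2,n}(z) \to R_2(z)$ strongly as a consequence of strong resolvent convergence).

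The main obstacle is the second summand, which must be matched with the specific expression appearing in \eqref{3.24}. The key device is the twisted resolvent identity
\[
R_{2,n}(z) - R_{1,n}(z) = \big[I_\cH + (z-z_0) R_{1,n}(z)\big]\big[R_{2,n}(z_0) - R_{1,n}(z_0)\big]\big[I_\cH + (z-z_0) R_{2,n}(z)\big],
\]
combined with the commutation $R_{1,n}(z_0)^m R_{1,n}(z) = R_{1,n}(z) R_{1,n}(z_0)^m$, which allows $R_{1,n}(z_0)^m$ to pass through the leftmost bracket, producing
\[
R_{1,n}(z_0)^m \big[R_{2,n}(z) - R_{1,n}(z)\big] = \big[I_\cH + (z-z_0)R_{1,n}(z)\big] \cdot R_{1,n}(z_0)^m \big[R_{2,n}(z_0) - R_{1,n}(z_0)\big] \cdot \big[I_\cH + (z-z_0)R_{2,n}(z)\big].
\]
The middle factor is precisely the quantity whose $\cB_p(\cH)$-convergence is hypothesized in \eqref{3.23a}--\eqref{3.24}; the outer factors are uniformly bounded by $1 + |z-z_0|/|\Im(z)|$ and converge strongly to their limits as $n \to \infty$. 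A second application of Lemma~\ref{l3.4} then delivers pointwise-in-$z$ convergence of this summand's contribution to $\Delta_n(z)$ to zero in $\cB_p(\cH)$.

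For the dominated-convergence step, the above estimates furnish an $n$-independent $\cB_p(\cH)$-bound on $(z-z_0)^m \Delta_n(z)$ of the form $C(z_0) |z-z_0|^m (1 + |z-z_0|/|\Im(z)|)^2$, where the uniform bounds on the $\cB_p(\cH)$-convergent factors follow from the fact that convergent sequences are bounded, together with $\|R(z)\| \leq |\Im(z)|^{-1}$. Multiplying by $|\partial \wti f_{\ell,\sigma}/\partial \overline{z}|(z)$, which is compactly supported in $\bbC$ and of order $|y|^\ell$ near $\bbR$ by \eqref{2.7}, produces an integrable majorant once $\ell$ is chosen sufficiently large (e.g., $\ell \geq m+2$). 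Interchanging limit and integral then completes the proof of \eqref{3.25}; the $\cB(\cH)$ version follows by reading each $\|\cdot\|_{\cB_p(\cH)}$ as $\|\cdot\|_{\cB(\cH)}$ in every step of the argument.
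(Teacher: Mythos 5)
Your proposal is correct and follows essentially the same route as the paper's proof: Khochman's representation \eqref{2.10}, the same splitting of the integrand into a power-difference term and a term $R_{1,n}(z_0)^m[R_{2,n}(z)-R_{1,n}(z)]$, the twisted resolvent identity \eqref{A.-1} to expose exactly the quantity hypothesized in \eqref{3.23a}--\eqref{3.24}, Lemma \ref{l3.4} for pointwise convergence in $z$, and dominated convergence with the standard bound \eqref{3.32}. The only (harmless) deviations are cosmetic: you rederive \eqref{2.10} by iterating the resolvent identity and Stokes' theorem instead of the paper's device $g(x)=f(x)(x-z_0)^m$, you telescope the first summand rather than feeding it directly to Lemma \ref{l3.4}, and you take $\ell\ge m+2$ where $\ell=2$ already suffices.
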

\begin{proof}
Let $f\in C_0^{\infty}(\bbR)$ be fixed and $\widetilde{f}_{\ell,\sigma}\in C_0^{\infty}(\bbR^2)$ a compactly supported almost analytic extension.  Following a device due to Khochman \cite{Kh07}, 
one introduces 
\begin{equation}\lb{3.26}
g(x) = f(x)(x-z_0)^m,
\end{equation}
concluding $g\in C_0^{\infty}(\bbR)$.  By the Davies--Helffer--Sj\"ostrand functional calculus \eqref{2.9} applied to $g$, one obtains for any self-adjoint operator $S$ in $\cH$, 
\begin{equation}\lb{3.27}
g(S) = \frac{1}{\pi}\int_{\bbC}dxdy\, 
\frac{\partial \widetilde{f}_{\ell,\sigma}}{\partial \overline{z}}(z)(z-z_0)^m (S-zI_{\cH})^{-1},
\end{equation}
and hence, 
\begin{align}
f(S) &= (S - z_0I_{\cH})^{-m}g(S) \no\\
&= \frac{1}{\pi}\int_{\bbC}dxdy\, 
\frac{\partial \widetilde{f}_{\ell,\sigma}}{\partial \overline{z}}(z) (z-z_0)^m 
(S - z_0I_{\cH})^{-m} (S - zI_{\cH})^{-1}.\lb{3.28}
\end{align}
Applying \eqref{3.28} multiple times choosing $H\in \{S_2,S_1,S_{2,n},S_{1,n}\}$, one infers
\begin{align}
&\big\| \big[f(S_{2,n}) - f(S_{1,n}) \big] - \big[f(S_2) - f(S_1) \big]\big\|_{\cB_p(\cH)}\no\\
&\quad \leq \frac{1}{\pi}\int_{\bbC}dxdy\, \bigg|\frac{\partial \widetilde{f}_{\ell,\sigma}}{\partial \overline{z}}(z) (z-z_0)^m\bigg|\no\\
&\qquad \times \big\|\big[(S_{2,n} - z_0I_{\cH})^{-m}(S_{2,n} - zI_{\cH})^{-1} - (S_{1,n} - z_0I_{\cH})^{-m}(S_{1,n} - zI_{\cH})^{-1} \big]\no\\
&\qquad \quad - \big[(S_2 - z_0I_{\cH})^{-m}(S_2 - zI_{\cH})^{-1} - (S_1 - z_0I_{\cH})^{-m}(S_1 - zI_{\cH})^{-1} \big]    \big\|_{\cB_p(\cH)},\no\\
&\hspace*{10cm} n\in \bbN.\lb{3.29}
\end{align}
In order to prove the convergence claim in \eqref{3.25}, the idea is to take the limit $n\to \infty$ and apply dominated convergence in \eqref{3.29}.  However, doing so requires one to obtain an $n$-independent integrable majorant for the expression under the integral in \eqref{3.29} and then to show that the integrand converges to zero pointwise with respect to $z$ as $n\to \infty$.  In order to carry this out, one expresses the difference in the $\|\,\cdot\,\|_{\cB_p(\cH)}$-norm in \eqref{3.29} as follows:
\begin{align}
&\big[(S_{2,n} - z_0I_{\cH})^{-m}(S_{2,n} - zI_{\cH})^{-1} - (S_{1,n} - z_0I_{\cH})^{-m}(S_{1,n} - zI_{\cH})^{-1} \big]\no\\
&\qquad - \big[(S_2 - z_0I_{\cH})^{-m}(S_2 - zI_{\cH})^{-1} - (S_1 - z_0I_{\cH})^{-m}(S_1 - zI_{\cH})^{-1} \big]\no\\
&\quad = (S_{2,n} - z_0I_{\cH})^{-m}(S_{2,n} - zI_{\cH})^{-1} - (S_{1,n} - z_0I_{\cH})^{-m}(S_{2,n} - zI_{\cH})^{-1}\no\\
&\qquad + (S_{1,n} - z_0I_{\cH})^{-m}(S_{2,n} - zI_{\cH})^{-1} - (S_{1,n} - z_0I_{\cH})^{-m}(S_{1,n} - zI_{\cH})^{-1}\no\\
&\qquad - \big[ (S_2 - z_0I_{\cH})^{-m}(S_2 - zI_{\cH})^{-1} - (S_1 - z_0I_{\cH})^{-m}(S_2 - zI_{\cH})^{-1}\no\\
&\qquad \quad + (S_1 - z_0I_{\cH})^{-m}(S_2 - zI_{\cH})^{-1} - (S_1 - z_0I_{\cH})^{-m}(S_1 - zI_{\cH})^{-1}\big]\no\\
&\quad = \big[(S_{2,n} - z_0I_{\cH})^{-m} - (S_{1,n} - z_0I_{\cH})^{-m} \big](S_{2,n} - zI_{\cH})^{-1}\no\\
&\qquad + (S_{1,n} - z_0I_{\cH})^{-m}\big[(S_{2,n} - zI_{\cH})^{-1} - (S_{1,n} - zI_{\cH})^{-1} \big]\no\\
&\qquad -\big\{ \big[ (S_2 - z_0I_{\cH})^{-m} - (S_1 - z_0I_{\cH})^{-m}\big](S_2 - zI_{\cH})^{-1}\no\\
&\qquad \quad + (S_1 - z_0I_{\cH})^{-m}\big[ (S_2 - zI_{\cH})^{-1} - (S_1 - zI_{\cH})^{-1}\big] \big\}\no\\
&\quad =  \big\{\big[(S_{2,n} - z_0I_{\cH})^{-m} - (S_{1,n} - z_0I_{\cH})^{-m} \big](S_{2,n} - zI_{\cH})^{-1}\no\\
&\qquad \quad - \big[ (S_2 - z_0I_{\cH})^{-m} - (S_1 - z_0I_{\cH})^{-m}\big](S_2 - zI_{\cH})^{-1}\big\}\no\\
&\qquad + \big\{(S_{1,n} - z_0I_{\cH})^{-m}\big[(S_{2,n} - zI_{\cH})^{-1} - (S_{1,n} - zI_{\cH})^{-1} 
\lb{3.30} \\
&\qquad \quad - (S_1 - z_0I_{\cH})^{-m}\big[ (S_2 - zI_{\cH})^{-1} - (S_1 - zI_{\cH})^{-1}\big]\big\},\quad z\in \bbC\backslash\bbR,\; n\in \bbN.    \no 
\end{align}
For the first term in braces after the final equality in \eqref{3.30}, one has a bound of the type
\begin{align}
&\big\|\big[(S_{2,n} - z_0I_{\cH})^{-m} - (S_{1,n} - z_0I_{\cH})^{-m} \big](S_{2,n} - zI_{\cH})^{-1}\no\\
&\qquad - \big[ (S_2 - z_0I_{\cH})^{-m} - (S_1 - z_0I_{\cH})^{-m}\big](S_2 - zI_{\cH})^{-1}\big\|_{\cB_p(\cH)}\no\\
&\quad \leq \widetilde{C}(z_0)|\Im(z)|^{-1},\quad z\in \bbC\backslash \bbR,\, n\in \bbN,\lb{3.31}
\end{align}
for a constant $\widetilde{C}(z_0)>0$ which does not depend on $n\in \bbN$ or $z\in \bbC\backslash\bbR$. The estimate in \eqref{3.31} follows at once from the triangle inequality, basic properties of the Schatten--von Neumann trace ideals, the standard resolvent estimate 
\begin{equation}\lb{3.32}
\|(S - zI_{\cH})^{-1}\|_{\cB(\cH)}\leq |\Im(z)|^{-1},\quad  z\in \bbC\backslash \bbR,
\end{equation} 
for an arbitrary self-adjoint operator $S$ in $\cH$, and assumption \eqref{3.23}.  Moreover, by Lemma \ref{3.4}, one also infers
 \begin{align}
&\lim_{n\to \infty}\big\|\big[(S_{2,n} - z_0I_{\cH})^{-m} - (S_{1,n} - z_0I_{\cH})^{-m} \big](S_{2,n} - zI_{\cH})^{-1}\no\\
&\qquad \quad - \big[ (S_2 - z_0I_{\cH})^{-m} - (S_1 - z_0I_{\cH})^{-m}\big](S_2 - zI_{\cH})^{-1}\big\|_{\cB_p(\cH)} =0,\quad z\in \bbC\backslash\bbR.\lb{3.33}
 \end{align}
 For the second term in braces after the final equality in \eqref{3.30}, 
 \begin{align}
 &(S_{1,n} - z_0I_{\cH})^{-m}\big[(S_{2,n} - zI_{\cH})^{-1} - (S_{1,n} - zI_{\cH})^{-1} \no\\
&\qquad \quad - (S_1 - z_0I_{\cH})^{-m}\big[ (S_2 - zI_{\cH})^{-1} - (S_1 - zI_{\cH})^{-1}\big]\no\\
&\quad = (S_1 - z_0I_{\cH})^{-m}\big[(S_1 - zI_{\cH})^{-1} - (S_2 - zI_{\cH})^{-1} \big]\no\\
&\qquad - (S_{1,n} - z_0I_{\cH})^{-m}\big[(S_{1,n} - zI_{\cH})^{-1} - (S_{2,n} - zI_{\cH})^{-1} \big]\no\\
&\quad = \{I_{\cH} + (z-z_0)(S_1 - zI_{\cH})^{-1} \}(S_1 - z_0I_{\cH})^{-m}\big[(S_1 - z_0I_{\cH})^{-1} - (S_2 - z_0I_{\cH})^{-1} \big]\no\\
&\qquad \quad \times \{I_{\cH} + (z-z_0)(S_2 - zI_{\cH})^{-1} \} - \{I_{\cH} + (z-z_0)(S_{1,n} - zI_{\cH})^{-1} \}\no\\
&\qquad \qquad \times(S_{1,n} - z_0I_{\cH})^{-m}\big[(S_{1,n} - z_0I_{\cH})^{-1} - (S_{2,n} - z_0I_{\cH})^{-1} \big]\no\\
&\qquad \qquad \times \{I_{\cH} + (z-z_0)(S_{2,n} - zI_{\cH})^{-1} \},\quad z\in \bbC\backslash\bbR,\, n\in \bbN.\lb{3.34}
\end{align}
Using \eqref{3.32}, one finds for any self-adjoint operator $H$,
\begin{align}
\|I_{\cH} + (z_0 - z)(H - zI_{\cH})^{-1}\|_{\cB(\cH)} &\leq 1 + (|z_0| + |z|)|\Im(z)|^{-1}\no\\
&\leq 2 (|z_0| + |z|)|\Im(z)|^{-1},\quad z\in \bbC\backslash\bbR.\lb{3.35}
\end{align}
As a result of \eqref{3.24}, \eqref{3.34} and \eqref{3.35},
\begin{align}
&\big\|(S_{1,n} - z_0I_{\cH})^{-m}\big[(S_{2,n} - zI_{\cH})^{-1} - (S_{1,n} - zI_{\cH})^{-1} \no\\
&\qquad - (S_1 - z_0I_{\cH})^{-m}\big[ (S_2 - zI_{\cH})^{-1} - (S_1 - zI_{\cH})^{-1}\big]\big\|_{\cB_p(\cH)}\no\\
&\leq \widehat{C}(z_0)\frac{|z_0|^2 + |z|^2}{|\Im(z)|^2},\quad z\in \bbC\backslash\bbR,\, n\in \bbN,\lb{3.36}
\end{align}
for a constant $\widehat{C}(z_0)>0$ which does not depend on $n\in \bbN$ or $z\in \bbC\backslash\bbR$.  Moreover, \eqref{3.24} and another application of Lemma \ref{l3.4} immediately imply
\begin{align}
&\lim_{n\to \infty}\big\|(S_{1,n} - z_0I_{\cH})^{-m}\big[(S_{2,n} - zI_{\cH})^{-1} - (S_{1,n} - zI_{\cH})^{-1} \no\\
&\qquad \quad - (S_1 - z_0I_{\cH})^{-m}\big[ (S_2 - zI_{\cH})^{-1} - (S_1 - zI_{\cH})^{-1}\big]\big\|_{\cB_p(\cH)}=0, \quad z\in \bbC\backslash\bbR.\lb{3.38}
\end{align}
The estimates in \eqref{3.32} and \eqref{3.36} show that away from $\bbR$, which has $dxdy$-measure equal to zero, the integrand in \eqref{3.29} is bounded above by
\begin{align}
\big|z-z_0\big|^m\bigg|\frac{\partial \widetilde{f}_{\ell,\sigma}}{\partial \overline{z}}(z)\bigg| \bigg(\frac{\widetilde{C}(z_0)}{|\Im(z)|} + \widehat{C}(z_0)\frac{[|z_0|^2 + |z|^2]}{|\Im(z)|^2} \bigg),\quad z\in \bbC\backslash\bbR,
\end{align}
which is integrable with $\ell=2$ in light of \eqref{2.7} and the fact that $\widetilde{f}_{\ell,\sigma}$ is compactly supported.  Therefore, taking the limit $n\to \infty$ on both sides of \eqref{3.29} and then applying dominated convergence in combination with \eqref{3.31}, \eqref{3.33}, and \eqref{3.38} yields \eqref{3.25}.

Clearly, the proof remains valid with $\cB_p(\cH)$ replaced by $\cB(\cH)$.
\end{proof}

\begin{remark} \lb{r3.8} 
Although applicable to the Witten index computation described in the introduction, 
Theorem \ref{t3.7} is far from optimal. Indeed, upon communicating Theorem \ref{t3.7} to 
G.\ Levitina, D.\ Potapov, and F.\ Sukochev, they subsequently pointed out to us \cite{LPS15} 
that an application of the 
double operator integral method permits one to extend the classes of functions $f$ to the one 
employed in \cite{Ya05}, and more importantly, the DOI approach permits one to dispense 
with the conditions \eqref{3.23a} and \eqref{3.24} altogether. (Conditions 
\eqref{3.23a} and \eqref{3.24} are clearly an artifact of the resolvent term $(S - zI_{\cH})^{-1}$ 
in formula \eqref{3.28}). This will be further pursued elsewhere \cite{CGLNPS16}.  
\end{remark}

\begin{remark} \lb{r3.6} 
While we exclusively focused on applications to self-adjoint operators $S$, 
as long as $\sigma(T)\subset \bbR$ and the singularity of the resolvent $(T - z I_{\cH})^{-1}$ 
of $T$ as $z$ approaches the 
spectrum is uniformly bounded by $|\Im(z)|^{-N}$ for some fixed $N \in \bbN$, choosing 
$\ell \in \bbN$ sufficiently large in $\wti f_{\ell,\sigma}$, one can handle such classes of 
non-self-adjoint operators $T$, particularly, operators in Banach spaces with real spectrum.  
In fact, a functional calculus for the case of a non-self-adjoint operator $T$ with 
$\sigma(T) \subset \bbR$ and a resolvent that satisfies an estimate of the type
\begin{equation}
\|(T - z I_{\cH})^{-1}\|_{\cB(\cH)} \leq c|\Im(z)|^{-1}\bigg(\frac{\langle z\rangle}{|\Im(z)|} \bigg)^{\alpha},\quad z\in \bbC\backslash\bbR,
\end{equation}
for some $c>0$ and $\alpha\geq 0$, was discussed in \cite{Da95a}, \cite{Da95b}, and in 
subsequent developments in \cite{BF03}, \cite{Cl12}, \cite{GMP02}, \cite{GP97}. Moreover, 
the case where the spectrum is contained in the unit circle or contained in finitely-many smooth arcs was discussed in \cite{Dy75}. 
\hfill $\diamond$
\end{remark}

\appendix
\section{Some Useful Resolvent Identities} \lb{sA}
\renewcommand{\theequation}{A.\arabic{equation}}
\renewcommand{\thetheorem}{A.\arabic{theorem}}
\setcounter{theorem}{0} \setcounter{equation}{0}

In this appendix we recall some well-known, yet useful relations for (powers of) resolvents. 

We start by recalling the well-known identity (see, e.g., \cite[p.~178]{We80}), 
\begin{align}
& (T_2 - z I_{\cH})^{-1} - (T_1 - z I_{\cH})^{-1} = (T_2 - z_0 I_{\cH})(T_2 - z I_{\cH})^{-1}   \no \\
& \quad \times \big[(T_2 - z_0 I_{\cH})^{-1} - (T_1 - z_0 I_{\cH})^{-1}\big] 
(T_1 - z_0 I_{\cH})(T_1 - z I_{\cH})^{-1},     \lb{A.-1} \\
& \hspace*{6.6cm} z, z_0 \in \rho(T_1) \cap \rho(T_2),    \no 
\end{align}
where $T_j$, $j=1,2$, are linear operators in $\cH$ with 
$\rho(T_1) \cap \rho(T_2) \neq \emptyset$. In addition, if $S$ is self-adjoint in $\cH$, we recall
the elementary estimate,  
\begin{align}
\begin{split} 
& \big\|(S - z_0 I_{\cH})(S - z I_{\cH})^{-1}\big\|_{\cB(\cH)} 
= \big\| I_{\cH} + (z - z_0) (S - z I_{\cH})^{-1}\big\|_{\cB(\cH)}     \lb{A.0} \\
& \quad \leq 8^{1/2} \big[|z_0|^2 + |z|^2\big]^{1/2} |\Im(z)|^{-1}, \quad z, z_0 \in \bbC \backslash \bbR.
\end{split} 
\end{align}

In addition, for $m\in \bbN$, we note (cf.\ \cite[p.~315]{Ya92}), 
\begin{align}
&(T_2 - zI_{\cH})^{-(m+1)} - (T_1 - zI_{\cH})^{-(m+1)}\no\\
&\quad = \big[(T_2 - zI_{\cH})^{-m} - (T_1 - zI_{\cH})^{-m} \big](T_1 - zI_{\cH})^{-1}\lb{A.1}\\
&\qquad + (T_2 - zI_{\cH})^{-m}\big[(T_2 - zI_{\cH})^{-1} - (T_1 - zI_{\cH})^{-1} \big],\quad z\in \rho(T_1) \cap \rho(T_2),   \no
\end{align}
and
\begin{align}
&(T_2 - zI_{\cH})^{-(m+1)} - (T_1 - zI_{\cH})^{-(m+1)}\no\\
&\quad  = (T_2 - zI_{\cH})^{-1}\big[(T_2 - zI_{\cH})^{-m} - (T_1 - zI_{\cH})^{-m} \big]\lb{A.2}\\
&\qquad + \big[(T_2 - zI_{\cH})^{-1} - (T_1 - zI_{\cH})^{-1} \big](T_2 - zI_{\cH})^{-m}\no\\
&\qquad - \big[(T_2 - zI_{\cH})^{-1} - (T_1 - zI_{\cH})^{-1} \big]\big[(T_2 - zI_{\cH})^{-m} 
- (T_1 - zI_{\cH})^{-m} \big],\no\\
&\hspace*{8.31cm} z\in \rho(T_1) \cap \rho(T_2).\no
\end{align}

Next, by applying Cauchy's integral formula,
\begin{equation}\lb{A.3}
f^{(k)}(z) = -\frac{k!}{2\pi i} \ointctrclockwise_{\Gamma}d\zeta \, \frac{f(\zeta)}{(\zeta - z)^{k+1}}, 
\quad z \in \Omega,
\end{equation} 
where $f$ is an analytic function in the open set $\Omega \subset \bbC$, and $\Gamma$ is a counterclockwise-oriented contour encompassing the point $z\in \Omega$, to a densely defined, closed linear operator $T$ in $\cH$ with nonempty resolvent set, one obtains a formula for higher powers of the resolvent of $T$ in terms of a fixed lower power as follows, 
\begin{align}
\begin{split} 
(T - zI_{\cH})^{-k} = -\frac{(k-m)!(m-1)!}{2\pi i[(k-1)!]} 
\ointctrclockwise_{\Gamma_z}d\zeta\, (\zeta - z)^{m-k-1}(T - \zeta I_{\cH})^{-m},&\lb{A.4}\\
z\in \rho(H_0),& 
\end{split} 
\end{align}
where for each $z\in \rho(T)$, $\Gamma_z$ is any counterclockwise-oriented circular contour centered at $z$ which does not intersect or encompass points of $\sigma(T)$.

The following lemma (cf.~\cite[p.~210]{Ya92}) states an elementary, yet useful, fact: 

\begin{lemma}\lb{l3.8}
Let $S_j$, $j\in \{1,2\}$ be self-adjoint operators in $\cH$.  If 
\begin{equation}\lb{3.40}
\big[(S_2 - zI_{\cH})^{-m} - (S_1 - zI_{\cH})^{-m}\big]\in \cB_p(\cH),\quad z\in \bbC\backslash\bbR,
\end{equation}
for some $p\in [1,\infty) \cup \{\infty\}$ and some $m\in \bbN$, then
\begin{equation}\lb{3.41}
\big[(S_2 - zI_{\cH})^{-n} - (S_1 - zI_{\cH})^{-n}\big]\in \cB_p(\cH),\quad z\in \bbC\backslash\bbR,\, n\geq m.
\end{equation}
\end{lemma}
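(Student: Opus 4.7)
My plan is to prove the lemma by induction on $n \geq m$. The base case $n = m$ is the hypothesis itself. For the inductive step I assume $F_n(z) := (S_2 - zI_\cH)^{-n} - (S_1 - zI_\cH)^{-n} \in \cB_p(\cH)$ for every $z \in \bbC \setminus \bbR$, and derive $F_{n+1}(z) \in \cB_p(\cH)$. The crucial identity is $\frac{d}{dz}(S - zI_\cH)^{-n} = n(S - zI_\cH)^{-(n+1)}$, which combined with Cauchy's integral formula for the $\cB(\cH)$-analytic function $\zeta \mapsto F_n(\zeta)$ on $\bbC \setminus \bbR$ yields the representation
\begin{equation*}
F_{n+1}(z) = \frac{1}{2\pi in}\oint_{\Gamma_z}\frac{F_n(\zeta)}{(\zeta - z)^2}\,d\zeta,
\end{equation*}
where $\Gamma_z$ is a positively oriented circle around $z$ of radius $|\Im(z)|/2$, lying in $\bbC \setminus \bbR$. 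The integral converges in $\cB(\cH)$-norm, and by the induction hypothesis the integrand takes values in $\cB_p(\cH)$ along $\Gamma_z$.

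When $p = \infty$ the result is immediate since $\cB_\infty(\cH)$ is $\cB(\cH)$-closed, so the $\cB(\cH)$-norm limit of Riemann sums (each a finite $\bbC$-linear combination of compact operators) remains compact. When $p \in [1, \infty)$ the ideal $\cB_p(\cH)$ is not $\cB(\cH)$-closed, and I would exploit the duality $\cB_p(\cH) = \cB_{p'}(\cH)^*$ (with $1/p + 1/p' = 1$, and $\cB_{p'}(\cH) = \cB_\infty(\cH)$ when $p = 1$) realized via the trace pairing $(T, K) \mapsto \tr_\cH(TK)$. Approximating the contour integral by Riemann sums $\Sigma_N \in \cB_p(\cH)$ which converge in $\cB(\cH)$-norm to $nF_{n+1}(z)$, and granted a uniform $\cB_p(\cH)$-norm bound on $\{\Sigma_N\}$, Banach--Alaoglu produces a weak-$*$ convergent subsequence $\Sigma_{N_k} \to T$ with $T \in \cB_p(\cH)$; testing the pairing against rank-one operators $K = |g\rangle\langle f|$ reduces to the scalar convergence $\langle g, \Sigma_N f\rangle \to \langle g, nF_{n+1}(z) f\rangle$ already granted by the $\cB(\cH)$-norm convergence, which identifies $T = nF_{n+1}(z)$ and thereby places $F_{n+1}(z)$ in $\cB_p(\cH)$.

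The main obstacle is securing the uniform bound $\sup_{\zeta \in \Gamma_z}\|F_n(\zeta)\|_{\cB_p(\cH)} < \infty$, and it is precisely here that the hypothesis $F_m(\zeta) \in \cB_p(\cH)$ for \emph{all} $\zeta \in \bbC\setminus\bbR$ (rather than at just a single point) plays its role. I would establish the bound by applying the uniform boundedness principle to the family $\{F_n(\zeta): \zeta \in \Gamma_z\}$ viewed as continuous linear functionals on $\cB_{p'}(\cH)$ through $K \mapsto \tr_\cH(F_n(\zeta) K)$. For rank-one $K = |f\rangle\langle g|$ the scalar quantity $\tr_\cH(F_n(\zeta) K) = \langle g, F_n(\zeta) f\rangle$ is $\cB(\cH)$-analytic in $\zeta$ and hence bounded on the compact contour $\Gamma_z$; extending to finite-rank $K$ by linearity and then to general $K \in \cB_{p'}(\cH)$ by a density argument (using that each functional is \emph{a priori} $\cB_{p'}(\cH)$-continuous, with operator norm exactly $\|F_n(\zeta)\|_{\cB_p(\cH)}$, combined with the pointwise bound $|\tr_\cH(F_n(\zeta)K)| \leq \|F_n(\zeta)\|_{\cB(\cH)}\|K\|_{\cB_1(\cH)}$ for trace-class $K$) yields pointwise boundedness on all of $\cB_{p'}(\cH)$. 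The uniform boundedness principle then furnishes the required uniform $\cB_p(\cH)$-norm bound and closes the induction.
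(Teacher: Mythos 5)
Your core identity is the one the paper itself uses: the paper proves Lemma \ref{l3.8} in one stroke via the Cauchy-type formula \eqref{A.4}, writing $(S_2-zI_{\cH})^{-n}-(S_1-zI_{\cH})^{-n}$ as a contour integral of $(\zeta-z)^{m-n-1}\big[(S_2-\zeta I_{\cH})^{-m}-(S_1-\zeta I_{\cH})^{-m}\big]$ over a circle $\Gamma_z$ avoiding $\bbR$, and your inductive, one-power-at-a-time version (Cauchy's formula for the first derivative, using $\frac{d}{dz}(S-zI_{\cH})^{-n}=n(S-zI_{\cH})^{-(n+1)}$) is an inessential variant of this. Your treatment of $p=\infty$ (norm-closedness of $\cB_\infty(\cH)$) is correct, and the Banach--Alaoglu/weak-$*$ identification of the limit of the Riemann sums is correct \emph{granted} the uniform bound $\sup_{\zeta\in\Gamma_z}\|F_n(\zeta)\|_{\cB_p(\cH)}<\infty$.

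The gap is precisely in how you propose to secure that uniform bound. The uniform boundedness principle needs pointwise boundedness of the family $\{K\mapsto\tr_\cH(F_n(\zeta)K)\}_{\zeta\in\Gamma_z}$ at \emph{every} $K\in\cB_{p'}(\cH)$, and your argument delivers it only for trace-class (in particular finite-rank) $K$, where indeed $|\tr_\cH(F_n(\zeta)K)|\le\|F_n(\zeta)\|_{\cB(\cH)}\|K\|_{\cB_1(\cH)}$ is uniformly bounded on the compact contour. The proposed ``density argument'' to pass to general $K\in\cB_{p'}(\cH)$ is circular: pointwise boundedness of a family of continuous functionals on a dense subspace, together with continuity of each individual functional, does \emph{not} imply pointwise boundedness on the whole space unless the functional norms $\|F_n(\zeta)\|_{\cB_p(\cH)}$ are already uniformly bounded --- which is exactly what is to be proved. (A standard counterexample to this pattern: $\phi_j(x)=jx_j$ on $\ell^2$ are pointwise bounded on the dense subspace of finitely supported sequences and each $\phi_j$ is continuous, yet $\|\phi_j\|\to\infty$ and pointwise boundedness fails on $\ell^2$.) Note also that strengthening your induction hypothesis to include a locally uniform $\cB_p(\cH)$-bound would make the inductive step work, but then the difficulty is concentrated entirely in the base case $n=m$: the lemma's hypothesis gives only pointwise membership $F_m(\zeta)\in\cB_p(\cH)$, and a locally uniform bound on $\|F_m(\zeta)\|_{\cB_p(\cH)}$ (equivalently, enough $\cB_p$-continuity or $\cB_p$-integrability of the integrand on $\Gamma_z$) is the quantitative input your write-up does not supply. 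For comparison, the paper's own proof is silent on this point as well --- it passes directly from ``integrand in $\cB_p(\cH)$ at each $\zeta\in\Gamma_z$'' to ``integral in $\cB_p(\cH)$'' --- so your instinct that something must be said here is sound; it is the specific UBP-via-density justification that does not hold up as written.
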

\begin{proof}
It suffices to apply the Cauchy-type formula \eqref{A.4} and note that
\begin{align}
&(S_2 - zI_{\cH})^{-n} - (S_1 - zI_{\cH})^{-n} \no\\
&\quad = -\frac{(n-m)!(m-1)!}{2\pi i[(n-1)!]}
\ointctrclockwise_{\Gamma_z}d\zeta\, (\zeta - z)^{m-n-1} \big[(S_2 - \zeta I_{\cH})^{-m} 
- (S_1 - zI_{\cH})^{-m}\big],\no\\
&\hspace*{8.5cm} z\in \bbC\backslash\bbR,\, n\geq m,\lb{3.42}
\end{align}
where $\Gamma_z$ is a counterclockwise-oriented circular contour centered at $z$ that does not intersect $\bbR$.
\end{proof}

\medskip

\noindent 
{\bf Acknowledgments.} We are indebted to Alan Carey, Galina Levitina, Denis Potapov, Fedor Sukochev, Yuri Tomilov, and Dmitriy Zanin for helpful discussions, and particularly to Yuri Tomilov 
for pointing out to us a number of key references in connection with almost analytic extensions. 

\medskip
 
 
\end{document}